\newtheorem{thm}{Theorem}[section]
\newtheorem{lem}[thm]{Lemma}
\newtheorem{prop}[thm]{Proposition}
\theoremstyle{definition}
\newtheorem{defn}[thm]{Definition}
\theoremstyle{remark}
\newtheorem{rem}[thm]{\bf Remark}
\numberwithin{equation}{section}
\begin{document}
\title[The derived-discrete algebras and standard equivalences]{The derived-discrete algebras and standard equivalences}
\author[Xiao-Wu Chen, Chao Zhang] {Xiao-Wu Chen, Chao Zhang$^*$}

\thanks{$^*$ The corresponding author}
\subjclass[2010]{18E30, 16G10, 18G35, 16E05}
\date{\today}

\thanks{E-mail:
xwchen$\symbol{64}$mail.ustc.edu.cn, czhang2$\symbol{64}$gzu.edu.cn}
\keywords{derived-discrete algebra,  standard equivalence, $\mathbf{K}$-standard category}%

\maketitle

\dedicatory{}%
\commby{}%

\begin{abstract}
We prove that any derived equivalence between derived-discrete algebras of finite global dimension is standard, that is,  isomorphic to the derived tensor functor by a two-sided tilting complex.
\end{abstract}

\section{Introduction}

Let $k$ be an algebraically closed field. For a finite dimensional $k$-algebra $A$, we denote by $A\mbox{-mod}$ the category of finite dimensional $A$-modules and by $\mathbf{D}^b(A\mbox{-mod})$ the bounded derived category. By a derived equivalence between two algebras $A$ and $B$, we mean a $k$-linear triangle equivalence between $\mathbf{D}^b(A\mbox{-mod})$ and $\mathbf{D}^b(B\mbox{-mod})$.

It is an open question in \cite{Ric} whether any derived equivalence between two algebras is standard, that is, isomorphic to the derived tensor functor by a two-sided tilting complex. The question is answered affirmatively for hereditary algebras \cite{MY}, (anti-)Fano algebras \cite{Mina}, triangular algebras \cite{Chen} and the algebra of dual numbers \cite{CY}.

The main result of this paper answers the open question affirmatively for derived-discrete algebras of finite global dimension; see Theorem \ref{thm:B}. Recall from \cite{CY} that the open question is affirmative for an algebra $A$,  provided that the category $A\mbox{-proj}$ of finite dimensional projective $A$-modules is $\mathbf{K}$-standard in the sense of \cite{CY}. Then the main result boils down to the following result: the category $A(r, N)\mbox{-proj}$  is $\mathbf{K}$-standard, where $A(r, N)$ is a certain Nakayama algebra given by a cyclic quiver with $N$ vertices and $r$ consecutive quadratic zero relations; see Theorem \ref{thm:A}.

Derived-discrete algebras are introduced in \cite{Vos}. They form an important class of algebras for testing conjectures and carrying out concrete calculation, since their bounded derived categories are very much accessible.  We mention that we rely on the classification \cite{BGS} of derived-discrete algebras up to derived equivalence.

We draw an immediate consequence of Theorem \ref{thm:B}: the group of standard derived autoequivalences for a derived-discrete algebra of finite global dimension, as calculated in \cite{KYan, BPP}, coincides with the whole group of derived autoequivalences.

The paper is structured as follows. In Section 2, we recall basic facts on triangle functors and determine the centers of a certain  triangulated category. In Section 3, we formulate Theorem \ref{thm:A}, from which we deduce Theorem \ref{thm:B}. In Section 4, we prove Theorem \ref{thm:A} for the case $r=1$. In Section 5, we sketch the proof for the case $r>1$, since it is very close to the one in Section 4. We mention that the argument here is similar to, but more complicated than, the one in \cite[Section 7]{CY}.

\section{Triangle functors and centers}

In this section, we first recall from \cite{CY} some  facts on triangle functors and the centers of a triangulated category. We determine the centers of a certain triangulated category. For triangulated categories, we refer to \cite{Hap88, Zim}.

Let $\mathcal{T}$ and $\mathcal{T}'$ be  triangulated categories, whose suspension functors are denoted by $\Sigma$ and $\Sigma'$, respectively. A triangle $(F, \omega)\colon \mathcal{T}\rightarrow \mathcal{T}'$ consists of an additive functor $F$ with a natural isomorphism $\omega\colon F\Sigma\rightarrow \Sigma' F$,  which sends an exact triangle to an exact triangle. More precisely, an exact triangle $X\stackrel{f}\rightarrow Y\stackrel{g}\rightarrow Z \stackrel{h}\rightarrow \Sigma(X)$ in $\mathcal{T}$ is sent to an exact triangle $F(X)\stackrel{F(f)}\rightarrow F(Y)\stackrel{F(g)}\rightarrow F(Z) \xrightarrow{\omega_X\circ F(h)} \Sigma'(FX)$ in $\mathcal{T}'$. We call $\omega$ the \emph{connecting isomorphism} for $F$. We sometimes suppress $\omega$ and say that $F$ is a triangle functor. For example, the identity functor $({\rm Id}_\mathcal{T}, {\rm Id}_\Sigma)\colon \mathcal{T}\rightarrow \mathcal{T}$ is a triangle functor, which is often abbreviated as ${\rm Id}_\mathcal{T}$. A natural transformation $\eta\colon (F, \omega)\rightarrow (F_1, \omega_1)$ between triangle functors is a natural transformation respecting the connecting isomorphisms, that is, $\omega_1\circ \eta \Sigma=\Sigma'\eta\circ \omega$.

The following well-known fact is given in \cite[Lemma 2.3]{CY}.

\begin{lem}\label{lem:adjust}
Let $(F, \omega)\colon \mathcal{T}\rightarrow \mathcal{T}'$ be a triangle functor. Suppose that we are given an isomorphism $\delta_X\colon F(X)\rightarrow F_1(X)$ in $\mathcal{T}'$ for each object $X\in \mathcal{T}$. Then there is a unique triangle functor $(F_1, \omega_1)$ such that $\delta\colon (F, \omega)\rightarrow (F_1, \omega_1)$ is an isomorphism between triangle functors.
\end{lem}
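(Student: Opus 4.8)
The plan is to transport the triangle structure from $(F,\omega)$ across the given isomorphisms $\delta_X$. The idea is forced: since $\delta$ must become an isomorphism of triangle functors, the compatibility condition $\omega_{1,X}\circ \delta_{\Sigma X}=\Sigma'\delta_X\circ\omega_X$ (the equation $\omega_1\circ\delta\Sigma=\Sigma'\delta\circ\omega$ written out objectwise) tells us exactly what $\omega_1$ has to be. So first I would \emph{define} the connecting isomorphism by
\begin{equation*}
\omega_{1,X}:=\Sigma'(\delta_X)\circ\omega_X\circ\delta_{\Sigma X}^{-1}\colon F_1\Sigma(X)\To \Sigma' F_1(X),
\end{equation*}
which is a composite of isomorphisms, hence an isomorphism; its naturality in $X$ follows from the naturality of $\omega$ together with the fact that $\delta$ is a collection of isomorphisms indexed by objects (so that $F_1$ becomes a functor by setting $F_1(f):=\delta_Y\circ F(f)\circ\delta_X^{-1}$ on a morphism $f\colon X\to Y$). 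This functor structure on $F_1$ is itself part of what must be checked, but it is routine: functoriality and additivity of $F_1$ are immediate transports along the $\delta$'s.

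Next I would verify that $(F_1,\omega_1)$ is genuinely a triangle functor, i.e.\ that it sends an exact triangle to an exact triangle with the prescribed connecting morphism. Given an exact triangle $X\xrightarrow{f}Y\xrightarrow{g}Z\xrightarrow{h}\Sigma X$, the functor $F$ sends it to an exact triangle, and applying the three isomorphisms $\delta_X,\delta_Y,\delta_Z$ componentwise yields an isomorphic candidate triangle on the $F_1$ objects; the point is that a triangle isomorphic to an exact triangle is exact, so the $F_1$-image is exact. One then only has to match the third map: the connecting morphism of the $F_1$-triangle is $\omega_{1,X}\circ F_1(h)$, and by the definition of $\omega_1$ and of $F_1(h)$ this equals $\Sigma'(\delta_X)\circ(\omega_X\circ F(h))\circ\delta_Z^{-1}$, which is precisely the image under the $\delta$-isomorphism of the connecting morphism $\omega_X\circ F(h)$ of the $F$-triangle. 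Hence the $\delta$'s constitute an isomorphism of triangles, confirming exactness and simultaneously showing $\delta\colon(F,\omega)\to(F_1,\omega_1)$ respects connecting isomorphisms by construction.

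For uniqueness, I would argue that any triangle functor $(F_1,\omega_1')$ for which $\delta$ is a morphism of triangle functors must satisfy $\omega_1'\circ\delta\Sigma=\Sigma'\delta\circ\omega$ objectwise; since every $\delta_X$ and $\delta_{\Sigma X}$ is invertible, this equation determines $\omega_1'$ uniquely, forcing $\omega_1'=\omega_1$. The underlying functor $F_1$ is already fixed on objects by hypothesis and on morphisms by the requirement that $\delta$ be natural, so there is nothing left to choose.

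I do not expect a serious obstacle here, as this is a bookkeeping lemma. The only mildly delicate point is keeping the variances and the positions of $\Sigma$ versus $\Sigma'$ straight in the definition of $\omega_1$ (note $\omega_X$ has source $F\Sigma(X)$, so the $\delta$ at the suspended object $\Sigma X$ must be inverted while the outer $\delta_X$ is suspended by $\Sigma'$); getting these composites to typecheck is what makes the definition of $\omega_1$ the ``decision'' of the proof, after which everything else is a direct verification.
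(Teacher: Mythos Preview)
Your proposal is correct and follows the same approach as the paper: the paper's proof consists precisely of writing down the forced formulas $F_1(f)=\delta_Y\circ F(f)\circ\delta_X^{-1}$ and $\omega_1=(\Sigma'\delta)\circ\omega\circ(\delta\Sigma)^{-1}$, which are exactly your definitions. You have simply spelled out the routine verifications (exactness of the image triangle, uniqueness) that the paper leaves implicit.
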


We will call the given isomorphisms $\delta_X$'s the \emph{adjusting isomorphisms}, and say that the new triangle functor $(F_1, \omega_1)$ is \emph{adjusted} from $(F, \omega)$.

\begin{proof}
The functor $F_1$ necessarily acts on a morphism $f\colon X\rightarrow Y$ by $\delta_Y\circ F(f)\circ (\delta_X)^{-1}$, and the connecting isomorphism $\omega_1$ is given by $(\Sigma'\delta)\circ \omega\circ (\delta\Sigma)^{-1}$.
\end{proof}

In what follows, we fix a field $k$. The triangulated category $\mathcal{T}$ is  required to be $k$-linear, which is Hom-finite and Krull-Schmidt. All the functors will be assumed to be $k$-linear.

For a set $\mathcal{M}$ of morphisms in $\mathcal{T}$, we denote by ${\rm obj}(\mathcal{M})$ the full subcategory formed by those objects, which are either the domain or the codomain of a morphism in $\mathcal{M}$. We say that  $\mathcal{M}$  $k$-linearly \emph{spans} $\mathcal{T}$,  provided that each morphism in ${\rm obj}(\mathcal{M})$ is a $k$-linear combination of the identity morphisms and  composition of morphisms from $\mathcal{M}$, and that each object in $\mathcal{T}$ is isomorphic to a finite direct sum of objects in ${\rm obj}(\mathcal{M})$.

The following fact follows from \cite[Lemmas 2.5 and 2.3]{CY}.

\begin{lem}\label{lem:span}
Let $\mathcal{M}$ be a spanning set of morphisms in $\mathcal{T}$. Assume that $(F, \omega)\colon \mathcal{T}\rightarrow \mathcal{T}$ is a $k$-linear triangle endofunctor such that $F(f)=f$ for each $f\in \mathcal{M}$. Then there is a unique natural isomorphism $\omega'\colon \Sigma\rightarrow \Sigma$ and  a unique natural isomorphism $\theta\colon (F, \omega)\rightarrow ({\rm Id}_\mathcal{T}, \omega')$ between triangle functors satisfying $\theta_S={\rm Id}_S$ for each object $S$ from ${\rm obj}(\mathcal{M})$. \hfill $\square$
\end{lem}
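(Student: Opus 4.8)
The plan is to prove that $F$ is, on the nose, the identity on the full subcategory $\mathcal{S}:={\rm obj}(\mathcal{M})$, to upgrade this to a natural isomorphism $\theta\colon F\to {\rm Id}_\mathcal{T}$ of additive functors by passing through direct-sum decompositions, and finally to let Lemma~\ref{lem:adjust} manufacture the connecting isomorphism $\omega'$. First I would observe that $F(f)=f$ for $f\in\mathcal{M}$ forces $F(S)=S$ for every $S\in\mathcal{S}$ (the domains and codomains of the $f$'s). Since $F$ is $k$-linear, preserves identities and respects composition, it then fixes every $k$-linear combination of identity morphisms and composites of morphisms from $\mathcal{M}$; by the spanning hypothesis this is every morphism between objects of $\mathcal{S}$. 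Hence $F$ restricts to ${\rm Id}$ on $\mathcal{S}$, both on objects and on morphisms.

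Next I would build $\theta$. For each $X$ fix an isomorphism $a_X\colon X\to P_X$ with $P_X=\bigoplus_i S_i$ and $S_i\in\mathcal{S}$, taking $a_S={\rm Id}_S$ whenever $S\in\mathcal{S}$; write $\iota_i,\pi_i$ for the biproduct data. Using $F(\pi_j\iota_i)=\delta_{ij}{\rm Id}_{S_i}$ and $F(\sum_i\iota_i\pi_i)={\rm Id}_{F(P_X)}$ one sees that $(F(P_X),F(\iota_i),F(\pi_i))$ is again a biproduct of the $S_i$, so that $\phi_{P_X}:=\sum_i\iota_i\circ F(\pi_i)\colon F(P_X)\to P_X$ is an isomorphism (inverse $\sum_i F(\iota_i)\circ\pi_i$). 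I then set $\theta_X:=a_X^{-1}\circ\phi_{P_X}\circ F(a_X)$, and note that the trivial choice of decomposition gives $\phi_S={\rm Id}_S$, hence $\theta_S={\rm Id}_S$ for $S\in\mathcal{S}$.

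The one genuine computation is naturality. Writing a morphism $h\colon P\to Q$ between biproducts of $\mathcal{S}$-objects in matrix form $h=\sum_{i,j}\iota_j^Q h_{ji}\pi_i^P$ with entries $h_{ji}$ fixed by $F$, I would expand $\phi_Q\circ F(h)$ and $h\circ\phi_P$ and check that both collapse to $\sum_{i,j}\iota_j^Q h_{ji}F(\pi_i^P)$, so $\phi_Q\circ F(h)=h\circ\phi_P$. Transporting this identity through the chosen $a_X,a_Y$ gives $\theta_Y\circ F(g)=g\circ\theta_X$ for all $g\colon X\to Y$, i.e. $\theta\colon F\to{\rm Id}_\mathcal{T}$ is a natural isomorphism of additive functors; naturality also guarantees that the functor produced by Lemma~\ref{lem:adjust} from the adjusting isomorphisms $\delta_X=\theta_X$ is exactly ${\rm Id}_\mathcal{T}$ (its morphism part $\theta_Y F(f)\theta_X^{-1}$ equals $f$). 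That lemma then yields a unique triangle functor $({\rm Id}_\mathcal{T},\omega')$, with $\omega'=(\Sigma\theta)\circ\omega\circ(\theta\Sigma)^{-1}$, turning $\theta$ into an isomorphism of triangle functors.

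For uniqueness, if $\theta$ is any natural isomorphism $F\to{\rm Id}_\mathcal{T}$ with $\theta_S={\rm Id}_S$ on $\mathcal{S}$, naturality along each projection $\pi_i\colon P\to S_i$ forces $\pi_i\circ\theta_P=F(\pi_i)$, whence $\theta_P=\sum_i\iota_i F(\pi_i)=\phi_P$ and then $\theta_X=a_X^{-1}\phi_{P_X}F(a_X)$; so $\theta$ is pinned down. Since any triangle isomorphism $(F,\omega)\to({\rm Id}_\mathcal{T},\omega')$ must satisfy $\omega'\circ\theta\Sigma=\Sigma\theta\circ\omega$, the isomorphism $\omega'$ is forced as well. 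I expect the main obstacle to be precisely the naturality step: $F$ fixes only the summands lying in $\mathcal{S}$ and not the direct-sum objects themselves, so $\theta$ must be assembled from the $F$-images of the structural projections, and it is this biproduct comparison that carries all the content; everything else, including the entire triangle structure, is formal once Lemma~\ref{lem:adjust} is invoked.
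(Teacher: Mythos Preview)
Your argument is correct. The paper itself does not prove this lemma at all: it simply records that the statement follows from \cite[Lemmas~2.5 and~2.3]{CY} and closes with a $\square$. So there is no in-paper proof to compare against; what you have written is a clean, self-contained substitute for that citation. The key ingredients you use---that $F$ restricts to the identity on $\mathcal{S}={\rm obj}(\mathcal{M})$ by $k$-linearity and functoriality, the biproduct comparison $\phi_{P}=\sum_i\iota_iF(\pi_i)$ to extend to all objects, and then Lemma~\ref{lem:adjust} to produce $\omega'$---are exactly the expected ones, and your naturality and uniqueness checks are sound. One cosmetic remark: your uniqueness argument for $\theta$ in fact also shows that the construction is independent of the chosen decompositions $a_X$, so you need not worry about that choice.
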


We denote by $Z(\mathcal{T})$ the \emph{center} of $\mathcal{T}$, which consists of all natural transformations $\lambda\colon {\rm Id}_\mathcal{T}\rightarrow {\rm Id}_\mathcal{T}$. Then $Z(\mathcal{T})$ is a commutative $k$-algebra, whose multiplication is induced by the composition of natural transformations. We denote by $Z_\vartriangle(\mathcal{T})$ the \emph{triangle center}, which is a subalgebra of $Z(\mathcal{T})$ consisting of natural transformations $\lambda\colon {\rm Id}_\mathcal{T}\rightarrow {\rm Id}_\mathcal{T}$ between triangle functors. Then $\lambda\in Z(\mathcal{T})$ belongs to $Z_\vartriangle(\mathcal{T})$ if and only if $\Sigma(\lambda_X)=\lambda_{\Sigma(X)}$ for any object $X\in \mathcal{T}$.

The center is related to almost-vanishing endomorphisms; see \cite{Ku}. Let $X$ be an indecomposable object in $\mathcal{T}$. An nonzero non-invertible endomorphism $\Delta\colon X\rightarrow X$ is \emph{almost-vanishing} provided that $\Delta\circ f=0=g\circ \Delta $ for any non-retraction morphism $f\colon Y\rightarrow X$ and non-section $g\colon X\rightarrow Z$. We observe that $\Delta$ is almost-vanishing if and only if  it fits into an almost split triangle $\Sigma^{-1}(X)\rightarrow E\rightarrow X\stackrel{\Delta}\rightarrow X$; see \cite[I.4.1]{Hap88}. Here, we identity $X$ with $\Sigma (\Sigma^{-1}X)$.  Moreover, we have $\Delta^2=0$.

We denote by ${\rm ind} \mathcal{T}$ a complete set of representatives  of isoclasses of indecomposable objects in $\mathcal{T}$. We observe that a natural transformation $\eta\colon F\rightarrow F_1$ between endofunctors on $\mathcal{T}$ is uniquely determined by its restriction on ${\rm ind}\mathcal{T}$.

The following construction of central elements is due to \cite[Lemma 2.2]{Ku}; see also \cite[Lemma 2.7]{CY}.

\begin{lem}\label{lem:center}
Assume that $X\in {\rm ind}\mathcal{T}$ has  an almost-vanishing endomorphism $\Delta$. Then there is a unique element $\delta(X)\in Z(\mathcal{T})$ satisfying $\delta(X)_X=\Delta$ and $\delta(X)_Y=0$ for any $Y\in {\rm ind}\mathcal{T}$, different from $X$. \hfill $\square$
\end{lem}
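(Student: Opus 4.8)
The plan is to construct $\delta(X)$ directly as a natural transformation $\mathrm{Id}_\mathcal{T}\to \mathrm{Id}_\mathcal{T}$, prescribing its value on each indecomposable object and extending additively; since $Z(\mathcal{T})$ is by definition the set of all such natural transformations, and since (as observed just before the lemma) a natural transformation between endofunctors is determined by its restriction to $\mathrm{ind}\,\mathcal{T}$, both existence and uniqueness will follow once naturality is established. Concretely, I would set $\delta(X)_X=\Delta$ and $\delta(X)_Y=0$ for each $Y\in \mathrm{ind}\,\mathcal{T}$ with $Y\neq X$, and then, using that $\mathcal{T}$ is Krull--Schmidt, define $\delta(X)_W$ on an arbitrary object by choosing a decomposition $W\cong \bigoplus_i W_i$ into indecomposables and letting $\delta(X)_W$ act by the prescribed endomorphisms on the summands (transported along the chosen isomorphisms $W_i\cong X$ where relevant).

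The heart of the argument is the naturality identity $f\circ \delta(X)_Y=\delta(X)_Z\circ f$ for every morphism $f\colon Y\to Z$. By additivity and $k$-bilinearity of composition, it suffices to check this when $Y$ and $Z$ are indecomposable; verifying it at this level of generality also shows that $\delta(X)_W$ is independent of the chosen decomposition (apply naturality to isomorphisms $W\cong W'$), so well-definedness comes for free. There are four cases. If neither $Y$ nor $Z$ is isomorphic to $X$, both sides vanish. If $Y=X$ and $Z\not\cong X$, then $f$ is a non-section (an indecomposable $Z$ admitting a section from $X$ would be isomorphic to $X$), so $f\circ \Delta=0$ by the almost-vanishing property, while the right-hand side is $0$. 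The case $Y\not\cong X$, $Z=X$ is symmetric, using that $f$ is a non-retraction and hence $\Delta\circ f=0$.

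The remaining and main obstacle is the diagonal case $Y=Z=X$, where one must show $f\circ \Delta=\Delta\circ f$ for every $f\in \mathrm{End}_\mathcal{T}(X)$. Here I would invoke that $\mathrm{End}_\mathcal{T}(X)$ is a finite-dimensional local $k$-algebra, since $X$ is indecomposable in a Hom-finite Krull--Schmidt category, with residue field $k$. If $f$ is non-invertible, it lies in the radical and is therefore simultaneously a non-retraction and a non-section, so $f\circ \Delta=0=\Delta\circ f$ and both sides agree. If $f$ is invertible, write $f=\lambda\,\mathrm{id}_X+r$ with $\lambda\in k^\times$ and $r\in \mathrm{rad}\,\mathrm{End}_\mathcal{T}(X)$; then $f\circ \Delta=\lambda\Delta=\Delta\circ f$, because $r\circ \Delta=0=\Delta\circ r$ by the same radical argument. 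This settles naturality, so that $\delta(X)\in Z(\mathcal{T})$, and uniqueness is immediate from the prescribed restriction to $\mathrm{ind}\,\mathcal{T}$. I expect the scalar-plus-radical decomposition of an automorphism to be the one step that genuinely relies on the standing field hypothesis (that the residue division algebra equals $k$), and I would flag it as the crux of the proof.
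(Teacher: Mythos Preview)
The paper does not supply its own proof of this lemma; it is stated with a citation to \cite{Ku} and \cite{CY} and closed with a $\square$. Your argument is the standard one and is essentially what those references do: prescribe $\delta(X)$ on $\mathrm{ind}\,\mathcal{T}$, extend additively via a Krull--Schmidt decomposition, and reduce naturality to morphisms between indecomposables, handling the off-diagonal cases with the almost-vanishing property and the diagonal case with the local structure of $\mathrm{End}_\mathcal{T}(X)$. Your remark that well-definedness (independence of the chosen decomposition) follows \emph{a posteriori} from naturality applied to the comparison isomorphism is the clean way to organise this.

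Your flagged crux is accurate and not merely cosmetic. The identity $f\Delta=\Delta f$ for invertible $f\in\mathrm{End}_\mathcal{T}(X)$ genuinely uses that $\mathrm{End}_\mathcal{T}(X)/\mathrm{rad}\cong k$; over a larger residue division algebra, conjugation by a unit could move $\Delta$ to a different almost-vanishing endomorphism, and the construction would fail. In Section~2 the paper only assumes $k$ is a field, so strictly speaking the hypothesis is missing there; however, every application of the lemma in the paper occurs under condition~(A1), where $\mathrm{End}_\mathcal{T}(X)=k\,\mathrm{Id}_X\oplus k\Delta_X$ forces the residue field to be $k$, so no harm is done downstream. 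It would be reasonable to record this assumption explicitly when writing up the proof.
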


The following fact, due to \cite[Proposition 2.6]{CY},  will be used often.

\begin{lem}\label{lem:scalar}
 Assume that $X\stackrel{f}\rightarrow Y \stackrel{g}\rightarrow Z \stackrel{h}\rightarrow \Sigma(X)$ is an exact triangle in $\mathcal{T}$ with $g\neq 0$ and $h\neq 0$ such that ${\rm End}_\mathcal{T}(Z)$ either equals $k{\rm Id}_Z$ or $k{\rm Id}_Z\oplus k\Delta$, where the endomorphism $\Delta$ on $Z$ is almost-vanishing. Then for a nonzero scalar $\lambda\in k$, the triangle $X\stackrel{f}\rightarrow Y \stackrel{g}\rightarrow Z \xrightarrow{\lambda h} \Sigma(X)$ is exact if and only if $\lambda =1$. \hfill $\square$
\end{lem}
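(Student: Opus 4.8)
The plan is to prove Lemma~\ref{lem:scalar}, which asserts that among the rescalings $\lambda h$ of the connecting morphism (with $f,g$ fixed), only $\lambda=1$ yields an exact triangle, under the hypothesis that $\mathrm{End}_\mathcal{T}(Z)$ is either $k\mathrm{Id}_Z$ or $k\mathrm{Id}_Z\oplus k\Delta$ with $\Delta$ almost-vanishing. First I would recall the standard mechanism for rescaling connecting morphisms: if $X\stackrel{f}\rightarrow Y\stackrel{g}\rightarrow Z\stackrel{h}\rightarrow \Sigma(X)$ is exact and $\phi\colon Z\rightarrow Z$ is an isomorphism, then the rotated/transported triangle $X\stackrel{f}\rightarrow Y\stackrel{\phi\circ g}\rightarrow Z\stackrel{h\circ\phi^{-1}}\rightarrow \Sigma(X)$ is again exact; conversely, two exact triangles on the same base $X\stackrel{f}\rightarrow Y$ differ by such an isomorphism of the third term commuting with $f,g,h$. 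So the key observation is that $X\stackrel{f}\rightarrow Y\stackrel{g}\rightarrow Z\xrightarrow{\lambda h}\Sigma(X)$ is exact precisely when there exists an isomorphism $\phi\in\mathrm{End}_\mathcal{T}(Z)$ making the obvious ladder between the two triangles commute, with $\phi\circ g=g$ (fixing the map into $Z$) and $\lambda h=h\circ\phi^{-1}$, i.e. $\lambda(h\circ\phi)=h$.

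Next I would analyze the two cases for the endomorphism algebra. Write $\phi=a\,\mathrm{Id}_Z+b\Delta$ with $a\in k^\times$ (invertibility forces the scalar part nonzero, since $\Delta^2=0$ makes $\Delta$ nilpotent) and $b\in k$, taking $b=0$ in the first case. From $\phi\circ g=g$ I get $(a-1)g+b(\Delta\circ g)=0$. Here the hypothesis $g\neq 0$ is crucial: since $\Delta$ is almost-vanishing, $\Delta\circ g=0$ for the non-section $g$ (more precisely, $g$ cannot be a section because $h\neq 0$), so the equation collapses to $(a-1)g=0$, giving $a=1$. Then from $\lambda(h\circ\phi)=h$, using $\phi=\mathrm{Id}_Z+b\Delta$, I compute $h\circ\phi=h+b(h\circ\Delta)$; again the almost-vanishing property gives $h\circ\Delta=0$ because $h\neq 0$ means $h$ is a non-retraction into $\Sigma(X)$ (equivalently $\Delta\circ(\text{anything factoring through }h)$ kills it), so $h\circ\phi=h$ and the relation becomes $\lambda h=h$, forcing $\lambda=1$ as $h\neq 0$.

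I expect the main obstacle to be pinning down precisely why $\Delta\circ g=0$ and $h\circ\Delta=0$ from the definition of almost-vanishing, since that definition is phrased in terms of non-retractions $f\colon Y\rightarrow X$ and non-sections $g\colon X\rightarrow Z$ around the \emph{indecomposable} object carrying $\Delta$. The careful point is to verify that in our triangle, $g$ is a non-section and $h$ is a non-retraction exactly because of the hypotheses $g\neq 0$, $h\neq 0$: were $g$ a section, $h$ would vanish, and were $h$ a retraction, $g$ would vanish, both contradicting our assumptions. Once these non-splitting facts are in place, the almost-vanishing condition applies directly to annihilate the $\Delta$-contributions. The remaining steps are the routine linear-algebra manipulations in $\mathrm{End}_\mathcal{T}(Z)$ outlined above, and the forward direction ($\lambda=1$ gives the original exact triangle) is trivial.
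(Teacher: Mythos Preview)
The paper does not actually prove this lemma; it is quoted from \cite[Proposition~2.6]{CY} with a bare $\square$. So there is no ``paper's proof'' to compare against, and your argument stands on its own.

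Your approach is correct in substance: use TR3 to produce an endomorphism $\phi$ of $Z$ making a morphism of triangles with identities on $X$, $Y$, $\Sigma(X)$; this $\phi$ is an isomorphism by the two-out-of-three property; then analyze $\phi=a\,\mathrm{Id}_Z+b\Delta$ using the almost-vanishing property to force $a=1$ and then $\lambda=1$.

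There is, however, a systematic slip in your terminology: you have swapped ``section'' and ``retraction'' throughout. By the definition in the paper, $\Delta\circ g=0$ requires $g\colon Y\to Z$ to be a \emph{non-retraction} (not split epi), and $h\circ\Delta=0$ requires $h\colon Z\to\Sigma(X)$ to be a \emph{non-section} (not split mono). These hold for the right reasons, but not the ones you state: since $h\circ g=0$ in the triangle, if $g$ were a retraction with $g\circ s=\mathrm{Id}_Z$ then $h=h\circ g\circ s=0$, contradicting $h\neq 0$; and if $h$ were a section with $r\circ h=\mathrm{Id}_Z$ then $g=r\circ h\circ g=0$, contradicting $g\neq 0$. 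Your claims ``were $g$ a section, $h$ would vanish'' and ``were $h$ a retraction, $g$ would vanish'' are not the correct implications in a triangle (those conditions force $f=0$, not what you wrote). Once you fix this wording, the proof goes through exactly as you outlined.
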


We will consider the following condition on $\mathcal{T}$.

\hskip 5pt

\noindent (A1)\quad There is a disjoint union ${\rm ind}\mathcal{T}=S\cup S'$ such that ${\rm End}_\mathcal{T}(X)=k{\rm Id}_X\oplus k \Delta_X$ for each $X\in S$ with $\Delta_X$ a fixed almost-vanishing endomorphism on $X$, and that ${\rm End}_\mathcal{T}(Y)=k{\rm Id}_Y$ for each $Y\in S'$.

\hskip 5pt

The triangulated category $\mathcal{T}$ is a \emph{block}, provided that $\mathcal{T}$ does not admit a decomposition into the product of  two nonzero triangulated subcategories.  It is \emph{non-degenerate} if there is a nonzero non-invertible morphism $X\rightarrow Y$ for some  $X, Y\in {\rm ind}\mathcal{T}$; for details, see \cite[Section 4]{CR}.

\begin{prop}\label{prop:A1}
Let $\mathcal{T}$ be a non-degenerate block satisfying  {\rm (A1)}. Then the following statements hold.
\begin{enumerate}
\item[(1)] $Z(\mathcal{T})=k1\oplus (\prod_{X\in S} \delta(X))$, where $\prod_{X\in S} \delta(X)$ is an ideal of square zero.
  \item[(2)]  Let $\lambda\in Z(\mathcal{T})$ be invertible. Then $({\rm Id}_\mathcal{T}, \Sigma(\lambda))$ is a triangle functor if and only if $1-\lambda$ lies in  $\prod_{X\in S} \delta(X)$.
\end{enumerate}
\end{prop}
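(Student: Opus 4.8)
The plan is to analyze a central element $\lambda$ through its ``scalar part'' at each indecomposable object, using condition {\rm (A1)} to peel off the almost-vanishing directions, and then to read off both statements from this description.

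For (1), I would write, using {\rm (A1)}, $\lambda_X=a_X{\rm Id}_X+b_X\Delta_X$ for $X\in S$ and $\lambda_Y=a_Y{\rm Id}_Y$ for $Y\in S'$, with $a_X,b_X\in k$. The key point is naturality against a nonzero morphism $u\colon X\to Y$ with $X\neq Y$ in ${\rm ind}\,\mathcal{T}$: such a $u$ is at once a non-section and a non-retraction, so $u\circ\Delta_X=0$ and $\Delta_Y\circ u=0$, and the naturality square for $u$ collapses to $a_Y\,u=a_X\,u$, giving $a_X=a_Y$. Since $\mathcal{T}$ is a non-degenerate block, any two indecomposables are linked by a chain of nonzero morphisms (see \cite{CR}), so all the $a_X$ equal a single scalar $a$. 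Then $\lambda-a1$ vanishes on $S'$ and takes values in $k\Delta_X$ on $S$, that is, $\lambda-a1\in\prod_{X\in S}\delta(X)$; conversely any assignment $(b_X)_{X\in S}$ defines a central element, as the only possibly nontrivial squares sit at endomorphisms of a single $X\in S$, where commutativity of ${\rm End}_\mathcal{T}(X)=k{\rm Id}_X\oplus k\Delta_X$ settles them, while all mixed squares vanish by almost-vanishing. This yields $Z(\mathcal{T})=k1\oplus\prod_{X\in S}\delta(X)$, and $\Delta_X^2=0$ shows the ideal squares to zero.

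For the ``only if'' part of (2), I would write $\lambda=a1+\nu$ with $\nu\in\prod_{X\in S}\delta(X)$, so that $1-\lambda\in\prod_{X\in S}\delta(X)$ is equivalent to $a=1$, and force $a=1$ by testing on one triangle. By non-degeneracy, pick a nonzero non-isomorphism $h\colon Z\to W$ between indecomposables, set $X=\Sigma^{-1}W$, and complete to an exact triangle $X\xrightarrow{f}Y\xrightarrow{g}Z\xrightarrow{h}\Sigma X$; here $Z$ is indecomposable, $h\neq0$, and $g\neq0$ since $g=0$ would make $h$ a split epimorphism between indecomposables. As $h$ is a non-retraction, $\Sigma(\Delta_X)\circ h=0$, so the image triangle has connecting map $\Sigma(\lambda_X)\circ h=a\,h$, and Lemma \ref{lem:scalar}, valid because ${\rm End}_\mathcal{T}(Z)$ is of the shape allowed by {\rm (A1)}, forces $a=1$.

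For the ``if'' part, assume $a=1$, so $\lambda_X={\rm Id}_X+\nu_X$ with $\nu_X\in k\Delta_X$ (and $\nu_X=0$ on $S'$). For an exact triangle $X\xrightarrow{f}Y\xrightarrow{g}Z\xrightarrow{h}\Sigma X$ I would check that $(\lambda_X,{\rm Id}_Y,{\rm Id}_Z)$ is an isomorphism onto $X\xrightarrow{f}Y\xrightarrow{g}Z\xrightarrow{\Sigma(\lambda_X)\circ h}\Sigma X$: the second and third squares commute automatically, and the first reduces to $f\circ\nu_X=0$. For indecomposable $X$ this holds because $h\neq0$ makes $f$ a non-split monomorphism, hence a non-section, so $f\circ\Delta_X=0$; but for decomposable $X$ a single summand of $f$ could still be a split monomorphism, and this is the delicate point. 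I resolve it by first writing the triangle, up to isomorphism, as a direct sum of trivial triangles $V\xrightarrow{{\rm Id}}V\to0\to\Sigma V$ and one triangle whose $f$ lies in the radical: on the trivial summands $\Sigma(\lambda)$ acts as the identity, and when $f$ is radical each component $f\circ\iota_j\colon X_j\to Y$ is a non-split monomorphism (a splitting would express ${\rm Id}_{X_j}$ as a sum of radical endomorphisms of the indecomposable $X_j$), whence $f\circ\nu_X=0$. Thus $(\lambda_X,{\rm Id}_Y,{\rm Id}_Z)$ is an isomorphism of triangles and the image is exact; since $({\rm Id}_\mathcal{T},\Sigma(\lambda))$ respects direct sums of triangles, it carries every exact triangle to an exact one. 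I expect this decomposable case to be the only genuine obstacle, the remaining steps being routine bookkeeping with almost-vanishing endomorphisms and Lemma \ref{lem:scalar}.
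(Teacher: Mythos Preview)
Your proposal is correct and follows essentially the same approach as the paper: the computation of $Z(\mathcal{T})$ via the constancy of the scalar part along chains of nonzero morphisms, the ``only if'' direction via Lemma~\ref{lem:scalar} applied to a single triangle built from a nonzero non-invertible morphism between indecomposables, and the ``if'' direction via splitting off trivial triangles and reducing to a radical morphism are all exactly the paper's strategy. The only cosmetic difference is that in the ``if'' part you decompose along $f$ (splitting off summands $V\xrightarrow{{\rm Id}}V\to 0\to\Sigma V$) and then exhibit an isomorphism $(\lambda_{X'},{\rm Id},{\rm Id})$ onto the image triangle, whereas the paper decomposes along the connecting morphism $h$ and observes that the image triangle literally coincides with the original on the radical piece; these are dual versions of the same reduction.
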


Here, the element $\delta(X)\in Z(\mathcal{T})$ is obtained by applying Lemma \ref{lem:center} to the fixed almost-vanishing endomorphism $\Delta_X$ for each $X\in S$. We observe that if $({\rm Id}_\mathcal{T}, \omega)$ is a triangle functor, then there is a unique invertible element $\lambda\in Z(\mathcal{T})$ with $\omega=\Sigma(\lambda)$. Hence, statement (2) characterizes all such connecting isomorphisms $\omega$.

\begin{proof}
Let $\lambda\in Z(\mathcal{T})$. We assume that $\lambda_X=c(X){\rm Id}_X+b(X)\Delta_X$ for $X\in S$ and $\lambda_Y=c(Y){\rm Id}_Y$ for $Y\in S'$, where $c(X), b(X)$ and $c(Y)$ are scalars. For (1), it suffices to claim that the function $c$ is constant on ${\rm ind}\mathcal{T}$.

 We first observe that if ${\rm Hom}_\mathcal{T}(U, V)\neq 0$ for $U, V\in {\rm ind}\mathcal{T}$, we have $c(U)=c(V)$. We just apply the naturality of $\lambda$ to the nonzero morphism $U\rightarrow V$. Here, if $U$ or $V$ lies in $S$, we use the fact that $\Delta_U$ or $\Delta_V$ is almost-vanishing. Now the claim follows by combining the observation and the following fact: for any indecomposables $U$ and $V$,  there is a sequence $U=X_0, X_1, \cdots, X_n=V$ of indecomposables, such that ${\rm Hom}_\mathcal{T}(X_i, X_{i+1})\neq 0$ or ${\rm Hom}_\mathcal{T}(X_{i+1}, X_i)\neq 0$; see \cite[Proposition 4.2 and Remark 4.7]{CR}.

For (2), it suffices to claim that $({\rm Id}_\mathcal{T}, \Sigma(\lambda))$ is a triangle functor if and only if the constant function $c$ has value $1$.

For the ``only if" part, we take a nonzero non-invertible morphism $h\colon U\rightarrow \Sigma(V)$ for some indecomposables $U$ and $V$, since $\mathcal{T}$ is  non-degenerate. Form an exact triangle  $V\stackrel{f}\rightarrow E\stackrel{g}\rightarrow U \stackrel{h}\rightarrow \Sigma(V)$. Applying the triangle functor $({\rm Id}_\mathcal{T}, \Sigma(\lambda))$ to this triangle, we obtain another exact triangle
$$V\stackrel{f}\longrightarrow E\stackrel{g}\longrightarrow U\xrightarrow{c(V) h} \Sigma(V).$$
Here, we use the fact $\Sigma(\Delta_V)\circ h=0$,  in case $V$ lies in $S$. By Lemma \ref{lem:scalar}, we infer that $c(V)=1$.

For the ``if" part,  we take an arbitrary exact triangle
$$\xi\colon X\stackrel{a}\longrightarrow Y\stackrel{b}\longrightarrow Z\stackrel{u}\longrightarrow \Sigma(X)$$ in $\mathcal{T}$. We observe that $u$ is isomorphic to $u'\oplus {\rm Id}_{\Sigma(K)}$ for a radical morphism $u'\colon Z'\rightarrow \Sigma(X')$ and some object $K$. Then $\xi$ is isomorphic to the direct sum of $\xi'\colon X'\stackrel{a'}\rightarrow Y \stackrel{b'}\rightarrow Z'\stackrel{u'}\rightarrow \Sigma(X')$ and the trivial triangle $\xi''\colon K\rightarrow 0\rightarrow \Sigma(K) \xrightarrow{{\rm Id}_{\Sigma(K)}}  \Sigma(K)$. We apply $({\rm Id}_\mathcal{T}, \Sigma(\lambda))$ to $\xi'$. Then the resulted triangle coincides with $\xi'$, since we have $\Sigma(\lambda_{X'})\circ u'=u'$, using the fact that the morphism $u'$ is radical. The trivial triangle $\xi''$ stays trivial by applying $({\rm Id}_\mathcal{T}, \Sigma(\lambda))$.  It follows that the resulted triangle by applying  $({\rm Id}_\mathcal{T}, \Sigma(\lambda))$ to $\xi$ is exact, that is, $({\rm Id}_\mathcal{T}, \Sigma(\lambda))$ is a triangle functor.
\end{proof}

We need a further condition to determine the triangle center.

\vskip 5pt

\noindent {\rm (A2)} \quad We assume (A1). Moreover, we assume that the action of $\Sigma$ on $S$ is \emph{free}, that is, for each $X\in S$ and $n\geq 1$, $\Sigma^n(X)\simeq X$ implies $n=1$.

\vskip 5pt

We denote by $S/\Sigma$ the set of $\Sigma$-orbits in $S$. Since each orbit is infinite, we can always make the following choice: for each $X\in S$, we choose  an isomorphism
\begin{align}\label{equ:ch1}
t_X\colon \Sigma(X)\longrightarrow X'
 \end{align}
 for a uniquely determined $X'\in S$; moreover, we assume that
 \begin{align}\label{equ:ch2}
 \Delta_{X'}=t_X\circ \Sigma(\Delta_X)\circ {t_X}^{-1}.
 \end{align}
For the latter assumption, we adjust  the almost-vanishing endomorphisms in (A1).

For $X\in S$, we denote by $[X]$ its $\Sigma$-orbit. We denote by $\delta([X])\in Z(\mathcal{T})$, which is uniquely determined by $\delta([X])_Y=0$ for $Y\in {\rm ind}\mathcal{T}\backslash [X]$ and $\delta([X])_{Z}=\Delta_Z$ for $Z\in [X]$. Indeed, by the identification in Proposition \ref{prop:A1}(1), $\delta([X])$ corresponds to the formal sum $\sum_{Z\in [X]} \delta(Z)$.

\begin{prop}\label{prop:A2}
Let $\mathcal{T}$ be a non-degenerate block satisfying  {\rm (A2)}. We keep the choices (\ref{equ:ch1}) and (\ref{equ:ch2}). Then the following statements hold.
\begin{enumerate}
\item[(1)] $Z_\vartriangle(\mathcal{T})=k1\oplus (\prod_{[X]\in S/\Sigma} \delta([X]))$, where $\prod_{[X]\in S/\Sigma} \delta([X])$ is an ideal of square zero.
  \item[(2)]  Each triangle functor $({\rm Id}_\mathcal{T}, \omega)$ is isomorphic to $({\rm Id}_\mathcal{T}, {\rm Id}_\Sigma)$.
\end{enumerate}
\end{prop}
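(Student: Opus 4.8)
The plan is to reduce both statements to explicit computations inside the commutative algebra $Z(\mathcal{T})=k1\oplus(\prod_{X\in S}\delta(X))$ already described in Proposition \ref{prop:A1}(1). Thus I write a general central element as $\lambda=c\cdot 1+\sum_{X\in S}b(X)\delta(X)$, so that $\lambda_X=c\,{\rm Id}_X+b(X)\Delta_X$ for $X\in S$ and $\lambda_Y=c\,{\rm Id}_Y$ for $Y\in S'$, with $c\in k$ and $b\colon S\to k$ an arbitrary function. Since $\Sigma$ is a $k$-linear autoequivalence, it permutes ${\rm ind}\mathcal{T}$ and preserves the partition $S\cup S'$ (the dimension of endomorphism rings is an invariant); together with the choices (\ref{equ:ch1}) this lets me identify $X'$, the successor of $X\in S$ in its orbit, with $\Sigma(X)$ via the fixed isomorphism $t_X$.

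For part (1) I use the criterion recorded before Proposition \ref{prop:A1}: $\lambda\in Z_\vartriangle(\mathcal{T})$ if and only if $\Sigma(\lambda_X)=\lambda_{\Sigma(X)}$ for every $X$. For $X\in S$, naturality of $\lambda$ applied to the isomorphism $t_X\colon\Sigma(X)\to X'$ gives $\lambda_{\Sigma(X)}=t_X^{-1}\circ\lambda_{X'}\circ t_X$; feeding in $\lambda_{X'}=c\,{\rm Id}_{X'}+b(X')\Delta_{X'}$ and the normalization (\ref{equ:ch2}) turns the right-hand side into $c\,{\rm Id}_{\Sigma(X)}+b(X')\Sigma(\Delta_X)$, while the left-hand side is $\Sigma(\lambda_X)=c\,{\rm Id}_{\Sigma(X)}+b(X)\Sigma(\Delta_X)$. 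Hence the triangle-center condition at $X$ reads precisely $b(X)=b(X')$, i.e. $b$ is constant along $\Sigma$-orbits; for $Y\in S'$ both sides equal $c\,{\rm Id}_{\Sigma(Y)}$, so no further constraint appears. This identifies $Z_\vartriangle(\mathcal{T})$ with the pairs $(c,b)$ where $b$ factors through $S/\Sigma$, which is exactly $k1\oplus(\prod_{[X]\in S/\Sigma}\delta([X]))$; the square-zero property is inherited from Proposition \ref{prop:A1}(1), since $\Delta_Z^2=0$ and distinct $\delta$'s are orthogonal.

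For part (2) I first write $\omega=\Sigma(\lambda)$ for the unique invertible $\lambda\in Z(\mathcal{T})$ noted after the statement. Proposition \ref{prop:A1}(2) forces $1-\lambda\in\prod_{X\in S}\delta(X)$, so $c=1$ and $\lambda_X={\rm Id}_X+b(X)\Delta_X$, $\lambda_Y={\rm Id}_Y$. I then look for an isomorphism $\theta\colon({\rm Id}_\mathcal{T},\omega)\to({\rm Id}_\mathcal{T},{\rm Id}_\Sigma)$ of triangle functors of the form $\theta\in Z(\mathcal{T})$ with $\theta_X={\rm Id}_X+q(X)\Delta_X$ and $\theta_Y={\rm Id}_Y$, for an unknown $q\colon S\to k$. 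Unwinding the defining compatibility ${\rm Id}_\Sigma\circ\theta\Sigma=\Sigma\theta\circ\omega$, i.e. $\theta_{\Sigma(X)}=\Sigma(\theta_X\circ\lambda_X)$, and repeating the computation of part (1) (naturality at $t_X$ and (\ref{equ:ch2}) for the left side, $\theta_X\circ\lambda_X={\rm Id}_X+(q(X)+b(X))\Delta_X$ for the right side, using $\Delta_X^2=0$), I find that the condition at each $X\in S$ becomes the recurrence $q(\Sigma(X))=q(X)+b(X)$, while at $Y\in S'$ it holds automatically. Any such $\theta$ is invertible, its constant part being $1$.

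The crux is the solvability of this recurrence, and this is exactly where hypothesis (A2) enters. By freeness each $\Sigma$-orbit in $S$ is isomorphic to $\mathbb{Z}$, so I may fix a base point, set $q=0$ there, and integrate $q(\Sigma(X))-q(X)=b(X)$ in both directions; because the orbit carries no relation there is no cyclic consistency condition to obstruct the choice, and a solution $q$ on all of $S$ always exists. Had an orbit been finite, solvability would instead require the sum of the $b$-values around the cycle to vanish, which is precisely the obstruction that freeness removes. The resulting $\theta$ is the desired isomorphism of triangle functors, proving (2). I expect the only delicate point to be the careful bookkeeping that reduces the triangle-functor compatibility to the scalar recurrence; once that is in place, the infinite-orbit integration is routine.
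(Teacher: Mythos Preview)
Your proof is correct and follows essentially the same approach as the paper: both parts reduce via Proposition~\ref{prop:A1} to analysing the scalar function $b\colon S\to k$, with part~(1) obtained by showing the triangle-center condition forces $b$ to be constant on $\Sigma$-orbits (via naturality at $t_X$ and (\ref{equ:ch2})), and part~(2) obtained by solving the recurrence $q(X')-q(X)=\pm b(X)$ along each infinite orbit. The only cosmetic difference is the sign of the recurrence (the paper writes $\phi(X)-\phi(X')=b(X)$), which is immaterial since solvability on a free $\mathbb{Z}$-orbit is insensitive to this sign.
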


\begin{proof}
Take $\lambda\in Z_\vartriangle(\mathcal{T})$. By Proposition \ref{prop:A1}(1), we may assume $$\lambda=c+\sum_{X\in S}b(X)\delta(X)$$
for $c, b(X)\in k$. For each $Z\in S$, we have $\Sigma(\lambda_Z)=\lambda_{\Sigma(Z)}$. Applying the naturality of $\lambda$ to the isomorphism $t_Z$ in (\ref{equ:ch1}) and using (\ref{equ:ch2}), we infer that $b(Z)=b(Z')$. Then the function $b$ is constant on each $\Sigma$-orbit of $S$. In other words, $\lambda=c+\sum_{[X]\in S/\Sigma} b(X)\delta([X])$. This proves (1).

For (2), we have an invertible element $\lambda\in Z(\mathcal{T})$ satisfying $\omega=\Sigma(\lambda)$; moreover, by Proposition \ref{prop:A1}(2), we may assume that
$$\lambda=1+\sum_{X\in S}b(X)\delta(X)$$ for $b(X)\in k$. For each $X\in S$, we choose a scalar $\phi(X)$ such that $\phi(X)-\phi(X')=b(X)$, where $X'\in S$ is uniquely determined by the isomorphism (\ref{equ:ch1}). This choice of the function $\phi(-)$  is possible, since the $\Sigma$-orbit $[X]$ is infinite.

Set $\eta=1+\sum_{X\in S}\phi(X) \delta(X)\in Z(\mathcal{T})$, which is invertible. We claim that $\eta\colon ({\rm Id}_\mathcal{T}, \omega)\rightarrow ({\rm Id}_\mathcal{T}, {\rm Id}_\Sigma)$ is the required isomorphism.

To prove that $\eta$ is a natural transformation between triangle functors, it suffices to verify $\eta_{\Sigma(U)}\circ \omega_U=\Sigma(\eta_U)$ for each $U\in {\rm ind}\mathcal{T}$. Recall that $\omega=\Sigma(\lambda)$ and that the value of $\delta(X)$ on $X$ is $\Delta_X$ for each $X\in S$. The left square in the following commutative diagram
\[
\xymatrix{
\Sigma(X) \ar[d]_-{\Sigma(\eta_X)} \ar[rrr]^-{\Sigma({\rm Id}_X+b(X) \Delta_X)} &&& \Sigma(X) \ar[d]^-{\eta_{\Sigma(X)}}\ar[r]^-{t_X} & X' \ar[d]^-{\eta_{X'}}\\
\Sigma(X) \ar@{=}[rrr] &&& \Sigma(X) \ar[r]^-{t_X} & X'
}\]
proves $\eta_{\Sigma(X)}\circ \omega_X=\Sigma(\eta_X)$. Here, the commutativity of the outer diagram follows from the choice of the function $\phi(-)$, since by (\ref{equ:ch2}) we have
$$(t_X)^{-1}\circ \eta_{X'}\circ t_X=\Sigma({\rm Id}_X+\phi(X')\Delta_X).$$
The identity $\eta_{\Sigma(Y)}\circ \omega_Y=\Sigma(\eta_Y)$ is clear for each  $Y\in S'$. This completes the proof of the claim.
\end{proof}

\section{The main results}

In this section, we first recall from \cite{CY} the notion of a $\mathbf{K}$-standard category. The  main techical result claims that the category of projective modules over a certain Nakayama algebra is $\mathbf{K}$-standard; see Theorem \ref{thm:A}. Then using the classification result in \cite{BGS}, we deduce that every derived equivalence between derived-discrete algebras of finite global dimension is standard; see Theorem \ref{thm:B}.

Let $k$ be a field and $\mathcal{A}$ be a $k$-linear additive category. We denote by $\mathbf{K}^b(\mathcal{A})$ the bounded homotopy category. A bounded complex $X=(X^n, d_X^n)_{n\in \mathbb{Z}}$ is visualized as $$\cdots \longrightarrow X^n \stackrel{d_X^n} \longrightarrow X^{n+1} \stackrel{d_X^{n+1}}\longrightarrow X^{n+2}\longrightarrow \cdots,$$
where $X^n=0$ for $|n|\gg 0$. The \emph{support} of $X$, denoted by ${\rm supp}(X)$, is the interval $[m, n]$ in $\mathbb{Z}$, where $m$ is the smallest integer with  $X^m\neq 0$, and $n$ the largest integer with $X^n\neq 0$. Two homotopic complexes may have different supports.

The suspension functor $\Sigma$ sends $X$ to $\Sigma(X)$, which is given by $\Sigma(X)^n=X^{n+1}$ and $d_{\Sigma(X)}^n=-d_X^{n+1}$. We identify $\mathcal{A}$ as the full subcategory of $\mathbf{K}^b(\mathcal{A})$ consisting of stalk complexes concentrated on degree zero. Therefore, for each integer $n$, an object $M\in \mathcal{A}$ yields a stalk complex $\Sigma^n(M)$ concentrated on degree $-n$. These complexes form a full subcategory $\Sigma^n(\mathcal{A})$, where $\mathcal{A}=\Sigma^0(\mathcal{A})$.

We have the following $k$-algebra homomorphism
\begin{align}\label{equ:res}
{\rm res} \colon Z_\vartriangle(\mathbf{K}^b(\mathcal{A}))\longrightarrow Z(\mathcal{A}),
\end{align}
sending $\lambda$ to $\lambda|_\mathcal{A}$, the restriction of $\lambda$ to $\mathcal{A}$. This homomorphism has a right inverse, which sends $\mu\in Z(\mathcal{A})$ to $\mathbf{K}^b(\mu)$, the natural extension of $\mu$ on complexes.

The following notions are introduced in \cite[Sections 3 and 4]{CY}. By \cite[Lemma 4.2]{CY}, the current notion of a $\mathbf{K}$-standard category is equivalent to the original one.

\begin{defn}
(1) A $k$-linear triangle endofunctor $(F, \omega)$ on $\mathbf{K}^b(\mathcal{A})$ is a \emph{pseudo-identity},  provided that $F(X)=X$ for each complex $X$, and for each integer $n$ the restriction $F|_{\Sigma^n(\mathcal{A})}\colon \Sigma^n(\mathcal{A})\rightarrow \Sigma^n(\mathcal{A})$ equals the identity functor.\\
(2) The $k$-linear additive category $\mathcal{A}$ is \emph{$\mathbf{K}$-standard} (over $k$),  provided that each pseudo-identity functor is isomorphic to $({\rm Id}_{\mathbf{K}^b(\mathcal{A})}, {\rm Id}_\Sigma)$, the genuine identity functor; it is \emph{strongly $\mathbf{K}$-standard}, if in addition the homomorphism (\ref{equ:res}) is injective.  \hfill $\square$
\end{defn}

We say that a pseudo-identity $(F, \omega)$ on $\mathbf{K}^b(\mathcal{A})$ is \emph{normalized}, provided that $\omega_X=\Sigma({\rm Id}_X)$ for each stalk complex $X$. Here, by a stalk complex, we mean a complex of the form $\Sigma^n(M)$ for some integer $n$ and $M\in \mathcal{A}$. According to the following observation,  up to isomorphism, we may always assume that any pseudo-identity is normalized.

\begin{lem}\label{lem:normal}
Let  $(F, \omega)$ be a pseudo-identity on $\mathbf{K}^b(\mathcal{A})$. Then there  exists a normalized pseudo-identity $(F', \omega')$ with an isomorphism $\delta\colon (F, \omega)\rightarrow (F', \omega')$.
\end{lem}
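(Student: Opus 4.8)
The plan is to construct the adjusting isomorphisms $\delta_X$ explicitly on each complex $X$, tailored so that the resulting adjusted functor $(F', \omega')$ becomes normalized, and then to invoke Lemma \ref{lem:adjust} to upgrade this object-wise family of isomorphisms to a genuine isomorphism of triangle functors. The key point is that a pseudo-identity already satisfies $F(X)=X$ for \emph{every} complex $X$, and that its restriction to each $\Sigma^n(\mathcal{A})$ is the identity functor; so the only obstruction to being normalized is that the connecting isomorphism $\omega_X\colon F\Sigma(X)\to \Sigma F(X) = \Sigma(X)$ need not equal $\Sigma({\rm Id}_X)$ on stalk complexes.

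First I would analyze what $\omega$ does on stalk complexes. For a stalk complex $X=\Sigma^n(M)$ with $M\in\mathcal{A}$, both $F\Sigma(X)$ and $\Sigma F(X)$ equal $\Sigma^{n+1}(M)$ as objects (using $F|_{\Sigma^{n+1}(\mathcal{A})}={\rm Id}$), so $\omega_X$ is an \emph{auto}morphism of the stalk complex $\Sigma^{n+1}(M)$. By naturality of $\omega$ with respect to morphisms of stalk complexes, together with the fact that $F$ fixes all such morphisms, the assignment $M\mapsto \omega_{\Sigma^n(M)}$ defines, for each $n$, a natural automorphism of the identity functor on $\Sigma^{n+1}(\mathcal{A})$, i.e. a central element. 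The goal is to absorb these central discrepancies into the adjusting isomorphisms.

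Next I would define the adjusting family $\delta_X$. On a stalk complex I would set $\delta$ to be the appropriate composite of the central automorphisms coming from $\omega$, chosen so that the adjusted connecting isomorphism $\omega'=(\Sigma\delta)\circ\omega\circ(\delta\Sigma)^{-1}$ (as in the proof of Lemma \ref{lem:adjust}) reduces to $\Sigma({\rm Id}_X)$ on every stalk complex; concretely, tracking degrees, one picks $\delta_{\Sigma^{n}(M)}$ recursively in $n$ so that the telescoping of the $\omega$-twists cancels. On a general complex I am free to choose $\delta_X={\rm Id}_X$, since the requirement we must meet only concerns stalk complexes, and I must then check that the adjusted $(F',\omega')$ is still a pseudo-identity: it automatically satisfies $F'(X)=X$ because $\delta$ is an isomorphism $F(X)\to X$ with $F(X)=X$ realized as the relevant composite, and $F'|_{\Sigma^n(\mathcal{A})}={\rm Id}$ because $F|_{\Sigma^n(\mathcal{A})}={\rm Id}$ and the chosen $\delta$ on stalk complexes is natural. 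Finally, Lemma \ref{lem:adjust} produces the unique triangle functor $(F',\omega')$ together with the isomorphism $\delta\colon(F,\omega)\to(F',\omega')$, and by construction $(F',\omega')$ is normalized.

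The main obstacle I anticipate is the bookkeeping in the recursive choice of $\delta$ on stalk complexes across all degrees $n$: one must ensure the degree-shifted central elements $\omega_{\Sigma^n(M)}$ telescope correctly so that $\omega'$ trivializes on \emph{all} stalk complexes simultaneously, and that the chosen $\delta_X$ is natural in $M$ (so that $F'$ remains a functor fixing morphisms between stalk complexes). This is where I would use that the restriction of $F$ to each $\Sigma^n(\mathcal{A})$ is literally the identity functor, which forces $\omega$ to be given by central (hence natural) data in each degree and guarantees that the cancellation can be arranged degree by degree without conflict.
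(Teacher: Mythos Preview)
Your proposal is correct and follows essentially the same approach as the paper: you recognize that $\omega$ restricted to stalk complexes is governed by a family of central elements $\lambda_n\in Z(\mathcal{A})$ (one for each degree), you solve the telescoping equation $\lambda_n=a_n^{-1}a_{n+1}$ to define $\delta_{\Sigma^n(M)}=\Sigma^n((a_n)_M)$, you take $\delta_X$ arbitrarily (e.g.\ the identity) on non-stalk complexes, and you invoke Lemma~\ref{lem:adjust}. The paper carries this out with exactly these formulas; your verification that $F'|_{\Sigma^n(\mathcal{A})}={\rm Id}$ because the $a_n$ are central is the one point the paper leaves implicit.
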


\begin{proof}
We observe that, for each $n\in \mathbb{Z}$, there is a unique invertible element $\lambda_n\in Z(\mathcal{A})$ such that  for each object $M\in \mathcal{A}$,  the connecting isomorphism
$$\omega_{\Sigma^n(M)}\colon F\Sigma(\Sigma^n M)=\Sigma^{n+1}(M)\longrightarrow \Sigma^{n+1}(M)=\Sigma F(\Sigma^n M)$$
equals $\Sigma^{n+1}((\lambda_n)_M)$. Take a sequence $\{a_n\}_{n\in \mathbb{Z}}$ of invertible elements in $Z(\mathcal{A})$ such that $a_0=1$ and $\lambda_n=a_{n}^{-1}a_{n+1}$ for each integer $n$.

For each complex $X$, we take an isomorphism $\delta_X\colon F(X)=X\rightarrow X=F'(X)$ such that $\delta_{\Sigma^n(M)}=\Sigma^n((a_n)_M)$ for each integer $n$ and $M\in \mathcal{A}$. We use  $\delta_X$'s as the adjusting isomorphisms to obtain the required isomorphism; see Lemma \ref{lem:adjust}. By the construction of $(F', \omega')$ in the proof, we infer that $(F', \omega')$ is a normalized pseudo-identity.  Here, we recall that $\omega'=(\Sigma\delta)\circ \omega\circ (\delta\Sigma)^{-1}$, from which and the choice of the sequence $\{a_n\}_{n\in \mathbb{Z}}$ we deduce that $\omega'_{\Sigma^n(M)}$ is the identity.
\end{proof}

Let $A$ be a finite dimensional $k$-algebra. We denote by $A\mbox{-mod}$ the abelian category of finite dimensional left $A$-modules, and by $A\mbox{-proj}$ the full subcategory formed by projective $A$-modules. Denote by $\mathbf{D}^b(A\mbox{-mod})$ the bounded derived category.

Two algebras $A$ and $B$ are \emph{derived equivalent} provided that there is a $k$-linear triangle equivalence $(F, \omega)\colon \mathbf{D}^b(A\mbox{-mod})\rightarrow \mathbf{D}^b(B\mbox{-mod})$, called a \emph{derived equivalence}. For any $B$-$A$-bimodule, we always require that $k$ acts centrally. A bounded complex $X$ of $B$-$A$-bimodules is a called \emph{two-sided tilting complex} provided that the derived tensor functor $X\otimes_A^\mathbb{L}-\colon \mathbf{D}^b(A\mbox{-mod})\rightarrow \mathbf{D}^b(B\mbox{-mod})$ is an equivalence. A derived equivalence $(F, \omega)$ is \emph{standard},  if it is isomorphic to $X\otimes_A^\mathbb{L}-$, as a triangle functor, for some two-sided tilting complex $X$. It is an open question in \cite{Ric} whether all derived equivalences are standard. For details on derived equivalences, we refer to \cite{Zim}.

The following result is the main motivation to study the $\mathbf{K}$-standardness, which is obtained by combining \cite[Theorems 6.1 and 5.10]{CY}.

\begin{prop}\label{prop:stand}
Let $A$ and $B$ be two finite dimensional $k$-algebras such that $A \mbox{-{\rm proj}}$ is $\mathbf{K}$-standard. Then any derived equivalence $\mathbf{D}^b(A\mbox{-{\rm mod}})\rightarrow \mathbf{D}^b(B\mbox{-{\rm mod}})$ is standard. \hfill $\square$
\end{prop}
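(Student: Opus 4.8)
The plan is to compare the given derived equivalence with a standard one and to show that their difference, restricted to perfect complexes, is (after rigidification) a pseudo-identity, so that the hypothesis applies directly. First I would bring in the theory of two-sided tilting complexes. Let $(F, \omega)\colon \mathbf{D}^b(A\mbox{-mod})\rightarrow \mathbf{D}^b(B\mbox{-mod})$ be the given derived equivalence. Since a triangle equivalence preserves compact objects, $F$ restricts to a triangle equivalence $\mathbf{K}^b(A\mbox{-proj})\rightarrow \mathbf{K}^b(B\mbox{-proj})$ between the subcategories of perfect complexes, sending $A$ to a tilting complex $T=F(A)$ with ${\rm End}_{\mathbf{D}^b(B\mbox{-mod})}(T)\cong A$. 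By the construction of Rickard and Keller there is a two-sided tilting complex $X$ of $B$-$A$-bimodules whose underlying complex of $B$-modules is isomorphic to $T$; the resulting standard equivalence $G:=X\otimes_A^\mathbb{L}-$ then satisfies $G(A)\cong T\cong F(A)$.

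Next I would pass to the self-equivalence $\Phi:=G^{-1}\circ F$ of $\mathbf{D}^b(A\mbox{-mod})$. Since a composition of standard equivalences is again standard, it suffices to prove that $\Phi$ is standard, whence $F=G\circ \Phi$ is standard as well. By construction $\Phi(A)\cong A$, so the restriction of $\Phi$ to $\mathbf{K}^b(A\mbox{-proj})$ is a self-equivalence permuting the indecomposable projective modules. Composing with the standard auto-equivalence given by a suitable automorphism of $A$, I may assume that $\Phi$ fixes each indecomposable projective, so that $\Phi|_{A\mbox{-proj}}$ is isomorphic to the identity functor. Using the adjusting isomorphisms of Lemma \ref{lem:adjust} and the normalization of Lemma \ref{lem:normal}, one then arranges that $\Phi$ fixes every complex on the nose and restricts to the identity on each $\Sigma^n(A\mbox{-proj})$; in other words, $\Phi$ is turned into a normalized pseudo-identity on $\mathbf{K}^b(A\mbox{-proj})$.

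Finally, since $A\mbox{-proj}$ is $\mathbf{K}$-standard, this pseudo-identity is isomorphic, as a triangle functor, to $({\rm Id}_{\mathbf{K}^b(A\mbox{-proj})}, {\rm Id}_\Sigma)$. Unwinding the successive adjustments and the algebra automorphism, all of which are standard, shows that $\Phi$ is standard on perfect complexes, and hence that $F$ is too. The main obstacle is the rigidification in the previous paragraph: a general triangle self-equivalence fixing $A$ only up to isomorphism must be replaced by one that literally fixes every object and equals the identity on all stalk complexes, while the connecting isomorphism $\omega$ is tracked compatibly across all shifts $\Sigma^n$. Controlling $\omega$ in this way is exactly what upgrades an isomorphism on $A\mbox{-proj}$ to a pseudo-identity, and it constitutes the heart of the argument; a secondary point, automatic when $A$ has finite global dimension, is to transport the conclusion from perfect complexes back to all of $\mathbf{D}^b(A\mbox{-mod})$.
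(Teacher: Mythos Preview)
The paper does not prove this proposition; it is imported wholesale from \cite{CY}, with the sentence ``obtained by combining \cite[Theorems 6.1 and 5.10]{CY}'' and a $\square$. So there is no in-paper argument to compare your sketch against; what you have written is essentially a reconstruction of the strategy of \cite{CY}, and the broad shape---reduce to an autoequivalence fixing $A$, rigidify to a pseudo-identity on $\mathbf{K}^b(A\mbox{-proj})$, apply the $\mathbf{K}$-standard hypothesis---is correct.

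Two points deserve tightening. First, your implication ``$\Phi$ fixes each indecomposable projective, so that $\Phi|_{A\mbox{-proj}}$ is isomorphic to the identity functor'' is not a logical consequence as written: fixing objects does not by itself give a natural isomorphism. What makes this work is the Morita-type fact that any $k$-linear self-equivalence of $A\mbox{-proj}$ is isomorphic to the twist by an algebra automorphism of $A$; it is \emph{that} automorphism you should cancel, after which $\Phi|_{A\mbox{-proj}}\cong {\rm Id}$ follows. Second, the proposition carries no finite-global-dimension hypothesis, so your ``secondary point'' is not secondary: passing the isomorphism $\Phi|_{\mathbf{K}^b(A\mbox{-proj})}\cong {\rm Id}$ back to all of $\mathbf{D}^b(A\mbox{-mod})$ genuinely requires an argument (this is part of what \cite[Theorem 5.10]{CY} supplies, via the identification of $\mathbf{D}^b(A\mbox{-mod})$ with $\mathbf{K}^{-,b}(A\mbox{-proj})$ and an extension of the natural isomorphism along brutal truncations). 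Flagging it is good; leaving it open is a gap relative to the statement as given.
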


The following observation will be needed.

\begin{lem}\label{lem:inv}
Let $A$ and $B$ be two finite dimensional $k$-algebras of finite global dimension, which are derived equivalent. Then $A \mbox{-{\rm proj}}$ is $\mathbf{K}$-standard if and only if so is $B\mbox{-{\rm proj}}$.
\end{lem}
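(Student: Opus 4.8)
The plan is to reduce to a single direction and then exploit Proposition \ref{prop:stand}. Since being derived equivalent is a symmetric relation, it suffices to prove that if $A\mbox{-proj}$ is $\mathbf{K}$-standard then so is $B\mbox{-proj}$. The key structural input I would use is that, because $A$ and $B$ have finite global dimension, the canonical embeddings induce triangle equivalences $\mathbf{K}^b(A\mbox{-proj})\simeq \mathbf{D}^b(A\mbox{-mod})$ and $\mathbf{K}^b(B\mbox{-proj})\simeq \mathbf{D}^b(B\mbox{-mod})$. Consequently a pseudo-identity on $\mathbf{K}^b(B\mbox{-proj})$ is in particular a genuine derived autoequivalence of $\mathbf{D}^b(B\mbox{-mod})$, and this identification is precisely where the finite global dimension hypothesis enters.

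Fix a derived equivalence $\Phi\colon \mathbf{D}^b(A\mbox{-mod})\rightarrow \mathbf{D}^b(B\mbox{-mod})$ and let $(G,\omega)$ be a pseudo-identity on $\mathbf{K}^b(B\mbox{-proj})\simeq \mathbf{D}^b(B\mbox{-mod})$. The crucial move is to apply Proposition \ref{prop:stand} not once but twice: both $\Phi$ and the composite $G\circ\Phi$ are derived equivalences out of $A$, so the $\mathbf{K}$-standardness of $A\mbox{-proj}$ forces both to be standard. Writing $\Phi\cong X\otimes_A^{\mathbb{L}}-$ and $G\circ\Phi\cong Y\otimes_A^{\mathbb{L}}-$ as triangle functors, for two-sided tilting complexes $X, Y$ of $B$-$A$-bimodules, and using that the inverse of a standard functor is again standard (given by the inverse two-sided tilting complex $X^{-1}$), I obtain $G\cong (G\circ\Phi)\circ\Phi^{-1}\cong Z\otimes_B^{\mathbb{L}}-$ as triangle functors, where $Z=Y\otimes_A^{\mathbb{L}}X^{-1}$ is a two-sided tilting complex of $B$-$B$-bimodules. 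Thus every pseudo-identity on $\mathbf{K}^b(B\mbox{-proj})$ is a standard functor.

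It then remains to show that a standard functor which is a pseudo-identity is isomorphic, as a triangle functor, to the genuine identity. Evaluating the isomorphism $G\cong Z\otimes_B^{\mathbb{L}}-$ at the stalk complex $B$ and using $G(B)=B$ yields $Z\cong B$ in $\mathbf{D}^b(B\mbox{-mod})$; hence $Z$ has cohomology concentrated in degree zero and is isomorphic, in the derived category of $B$-$B$-bimodules, to an honest invertible bimodule that is free of rank one as a left module, that is, to $B_\sigma$ for some algebra automorphism $\sigma$. So $G$ is isomorphic to the twist functor attached to $\sigma$. Finally, the pseudo-identity condition forces $G$ to restrict to the identity functor on $B\mbox{-proj}$, so this twist functor is isomorphic to the identity on modules; a direct computation with the regular module (tracking the naturality against right multiplications) shows this forces $\sigma$ to be inner, whence $B_\sigma\cong B$ as bimodules and $G\cong B\otimes_B^{\mathbb{L}}-\cong ({\rm Id}, {\rm Id}_\Sigma)$. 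This proves $B\mbox{-proj}$ is $\mathbf{K}$-standard, and by the symmetry of the hypotheses the stated equivalence follows.

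I expect the main obstacle to be this last step: verifying that the standard representative $Z$ of a pseudo-identity collapses to the trivial bimodule. One must pass carefully from an isomorphism of left modules to an isomorphism of bimodules, handling the possible automorphism twist, and one must track throughout that every identification is genuinely an isomorphism \emph{of triangle functors} rather than merely of underlying additive functors. By contrast, the two earlier steps---realizing pseudo-identities as derived autoequivalences via finite global dimension, and the double application of Proposition \ref{prop:stand} to $\Phi$ and to $G\circ\Phi$---are the conceptual heart of the argument but are formally routine.
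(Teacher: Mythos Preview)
Your argument is correct, but it takes a genuinely different route from the paper's proof. The paper does not argue directly with pseudo-identities on $\mathbf{K}^b(B\mbox{-proj})$ at all; instead it invokes two facts from \cite{CY}: first, under finite global dimension, $A\mbox{-proj}$ is $\mathbf{K}$-standard if and only if $A\mbox{-mod}$ is $\mathbf{D}$-standard (in the sense of \cite[Definition~5.1]{CY}); second, $\mathbf{D}$-standardness of the module category is a derived invariant \cite[Lemma~5.12]{CY}. So the paper's proof is a two-line reduction to existing machinery.

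Your approach avoids the auxiliary notion of $\mathbf{D}$-standardness entirely and is more self-contained: the double application of Proposition~\ref{prop:stand} to $\Phi$ and to $G\circ\Phi$ is a clean idea, and the endgame (identifying the tilting complex $Z$ with a twisted bimodule $B_\sigma$ and forcing $\sigma$ to be inner) is standard Morita--Rickard theory. Two small points deserve mention. First, you are implicitly using that a pseudo-identity $G$ is an autoequivalence; this holds because ${\rm Hom}_{\mathbf{K}^b}(\Sigma^m P,\Sigma^n Q)=0$ for $m\neq n$ and $P,Q\in B\mbox{-proj}$, so $G$ is fully faithful on the generating stalk complexes and hence everywhere. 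Second, the passage from $Z\cong B$ as left modules to $Z\cong B_\sigma$ as bimodules, and then to $\sigma$ inner, is exactly the delicate step you flag; your sketch via naturality against right multiplications is the right computation. The paper's route trades this computation for a citation, while yours makes the mechanism explicit.
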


\begin{proof}
By \cite[Section 6]{CY}, $A \mbox{-{\rm proj}}$ is $\mathbf{K}$-standard  if and only if the module category $A\mbox{-mod}$ is $\mathbf{D}$-standard in the sense of \cite[Definition 5.1]{CY}. Then the desired equivalence follows from \cite[Lemma 5.12]{CY}.
\end{proof}

Let $r\geq 1$ and $N\geq r$. Let $A=A(r, N)$ be the finite dimensional algebra given by the following cyclic quiver
\[
\xymatrix@R=6pt{
&  1 \ar[r]^-{\alpha_1} & \cdots  \ar[r]^-{\alpha_{r-2}} & r-1 \ar[dr]^-{\alpha_{r-1}}\\
0\ar[ur]^-{\alpha_0} &&&& r\ar[dl]^-{\alpha_r}\\
& N-1 \ar[ul]^-{\alpha_{N-1}} & \cdots \ar[l]^-{\alpha_{N-2}} & r+1 \ar[l]^-{\alpha_{r+1}}
}\]
with $r$ relations $\{\alpha_0\alpha_{N-1}, \alpha_1\alpha_0, \cdots, \alpha_{r-1}\alpha_{r-2}\}$. Here, we write the concatenation of arrows from right to left. Then $A$ is a Nakayama algebra. For each vertex $i$, we denote by $e_i$ the corresponding primitive idempotent.

We assume that $r<N$. Then the global dimension of $A$ is $r+1$. For $0\leq i\leq r-1$, we denote by $P_i=Ae_i$ the indecomposable projective module corresponding to $i$. Similarly, we have $Q_a=Ae_{a}$ for $r\leq a< N$. We observe that $\{P_0, P_1, \cdots, P_{r-1}\}$ is a complete set of representatives  of isoclasses of  indecomposable projective-injective $A$-modules.

The following convention of unnamed arrows will be used later.

\vskip 5pt

\noindent {\bf Convention} $(\dag)$ \quad  For $1\leq i\leq r-1$, we denote by the unnamed arrow $P_i\rightarrow P_{i-1}$ the unique $A$-module homomorphism sending $e_i$ to $\alpha_{i-1}$. The unnamed arrow $P_0\rightarrow P_{r-1}$ sends $e_0$ to the path $\alpha_{N-1}\cdots \alpha_{r}\alpha_{r-1}$. Indeed, we will understand the index $i$ in the cyclic group $\mathbb{Z}/{r\mathbb{Z}}$. For $r\leq a< N$, we denote by the unnamed arrows $Q_a\rightarrow P_{r-1}$ and $P_0\rightarrow Q_a$ the unique homomorphisms sending $e_a$ to $\alpha_{a-1}\cdots \alpha_r\alpha_{r-1}$, and $e_0$ to $\alpha_{N-1}\cdots \alpha_{a+1}\alpha_a$, respectively. For $r\leq b<a<N$, the unnamed arrow $Q_a\rightarrow Q_b$ sends $e_a$ to the unique path going from $b$ to $a$.

\vskip 5pt

The main result of this paper is as follows.

\begin{thm}\label{thm:A}
Let $1\leq r\leq N$ and $A=A(r, N)$ be the above algebra. Then the following statements hold.
\begin{enumerate}
\item If $r=1$, then the category $A\mbox{-{\rm proj}}$ is $\mathbf{K}$-standard, but not strongly $\mathbf{K}$-standard.
\item If $r>1$, then the category $A\mbox{-{\rm proj}}$ is strongly $\mathbf{K}$-standard.
\end{enumerate}
Consequently, any derived equivalence  $\mathbf{D}^b(A(r, N)\mbox{-{\rm mod}})\rightarrow \mathbf{D}^b(B\mbox{-{\rm mod}})$ is standard.  \hfill $\square$
\end{thm}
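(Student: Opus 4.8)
The plan is to prove the $\mathbf{K}$-standardness statement for $A(r,N)\mbox{-proj}$ directly, and then extract the ``consequently'' clause almost for free. For the final consequence, once we know $A\mbox{-proj}$ is $\mathbf{K}$-standard, Proposition \ref{prop:stand} immediately yields that any derived equivalence $\mathbf{D}^b(A(r,N)\mbox{-mod})\to \mathbf{D}^b(B\mbox{-mod})$ is standard, so no further work is needed there. Thus the genuine content is statements (1) and (2), and I would organize the two cases around the same skeleton, treating $r=1$ in full detail and then indicating the modifications for $r>1$.

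The core strategy is to take an arbitrary pseudo-identity $(F,\omega)$ on $\mathbf{K}^b(A\mbox{-proj})$ and show it is isomorphic to the genuine identity. By Lemma \ref{lem:normal} I may assume $(F,\omega)$ is normalized, so it fixes all stalk complexes and their shifts strictly. The first main step is to produce a spanning set $\mathcal{M}$ of morphisms in $\mathbf{K}^b(A\mbox{-proj})$ adapted to the Nakayama structure, built from the unnamed arrows of Convention $(\dag)$ together with the shift functor; the indecomposable objects of $\mathbf{K}^b(A\mbox{-proj})$ for these gentle/Nakayama algebras are explicitly known (string-type complexes), and the maps between them are generated by these elementary arrows. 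The second step is to adjust $F$, via Lemma \ref{lem:adjust}, so that $F$ fixes each generating morphism in $\mathcal{M}$; this requires checking that the scalars $F$ assigns to the basic arrows can be normalized to $1$, which is where the explicit shape of the complexes and Lemma \ref{lem:scalar} enter, forcing the connecting scalars on radical maps to equal $1$. Once $F(f)=f$ for all $f\in\mathcal{M}$, Lemma \ref{lem:span} gives a triangle-functor isomorphism $(F,\omega)\cong(\mathrm{Id},\omega')$ for some $\omega'$, reducing us to understanding connecting isomorphisms of the identity functor.

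The final step is to identify $\mathbf{K}^b(A\mbox{-proj})$ with a triangulated category of the type analyzed in Section 2 and apply Proposition \ref{prop:A2}(2) to conclude $(\mathrm{Id},\omega')\cong(\mathrm{Id},\mathrm{Id}_\Sigma)$. Concretely I would verify that $\mathbf{K}^b(A\mbox{-proj})$ is a non-degenerate block satisfying condition (A2): the projective-injective modules $P_0,\dots,P_{r-1}$ and their role produce the distinguished indecomposables $X\in S$ carrying almost-vanishing endomorphisms $\Delta_X$, the remaining indecomposables form $S'$ with endomorphism ring $k$, and the suspension $\Sigma$ acts freely on $S$ because the orbit of any such complex under shifting is infinite. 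After fixing the choices (\ref{equ:ch1}) and (\ref{equ:ch2}), Proposition \ref{prop:A2}(2) kills the residual $\omega'$ up to isomorphism, giving $\mathbf{K}$-standardness. For strong $\mathbf{K}$-standardness when $r>1$, I would additionally show the restriction map (\ref{equ:res}) is injective by checking that a nonzero central element of $\mathbf{K}^b(A\mbox{-proj})$ lying in $Z_\vartriangle$ must already act nontrivially on $A\mbox{-proj}=\Sigma^0(A\mbox{-proj})$; the failure of injectivity when $r=1$ should be traceable to the single almost-split class producing a central element supported away from degree zero.

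I expect the main obstacle to be the combinatorial bookkeeping in the second step: verifying that $F$ fixes a full spanning set. The almost-vanishing endomorphisms on the projective-injectives introduce genuine $2$-dimensional endomorphism rings, so when I apply Lemma \ref{lem:scalar} to normalize the scalars on the connecting maps of the relevant exact triangles, I must carefully track which triangles have codomain with endomorphism ring $k\mathrm{Id}\oplus k\Delta$ versus $k\mathrm{Id}$, and ensure the triangles chosen genuinely have both $g\neq 0$ and $h\neq 0$. Handling the wrap-around relation $P_0\to P_{r-1}$ (indices in $\mathbb{Z}/r\mathbb{Z}$) and the interaction between the $P_i$ and the $Q_a$ is where the argument becomes delicate and case-dependent, and it is precisely the reason the $r>1$ case is ``more complicated'' than the dual-numbers computation of \cite{CY}; I would therefore carry out the $r=1$ computation explicitly and then argue that the $r>1$ case follows by the same mechanism with the enlarged family of unnamed arrows.
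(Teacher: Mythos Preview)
Your proposal is correct and follows essentially the same approach as the paper: normalize the pseudo-identity, adjust it to act trivially on an explicit spanning set (the paper organizes this set into inclusions, projections, connections, and mixed-type maps between the string complexes), invoke Lemma~\ref{lem:span} and Proposition~\ref{prop:A2}(2), and read off the (non-)injectivity of (\ref{equ:res}) from Proposition~\ref{prop:A2}(1). One small sharpening: for $r>1$ every indecomposable in $\mathbf{K}^b(A\mbox{-proj})$ has endomorphism ring $k$, so the set $S$ in (A1)--(A2) is actually empty, whence $Z_\vartriangle(\mathcal{T})=k1$ and (\ref{equ:res}) is trivially injective; for $r=1$ the set $S/\Sigma$ is infinite (not a single class), which is what obstructs injectivity.
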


For the proof, the last statement follows from Proposition \ref{prop:stand}. The case $r=N$ is treated in \cite[Section 7]{CY}. Assume that $r<N$. We will prove  (1) in Section 4 and (2) in Section 5.

For the rest of this section, we assume that the base field $k$ is algebraically closed. We will apply Theorem \ref{thm:A} to derived-discrete algebras of finite global dimension.

 Recall from \cite{Vos} that a finite dimensional $k$-algebra $A$ is \emph{derived-discrete},  provided that for each vector ${\bf n}=(n_i)_{i\in \mathbb{Z}}$ of natural numbers there are only finitely many isomorphism classes of indecomposable objects in $\mathbf{D}^b(A\mbox{-mod})$  with cohomology dimension vector $\bf{n}$. Here, for a complex $X$ of $A$-modules, its cohomology dimension vector is given by $({\rm dim}\; H^i(X))_{i\in \mathbb{Z}}$. By \cite[Proposition 1.1]{Vos}, derived-discrete algebras are closed under derived equivalences.

By the classification result  \cite{BGS} of derived-discrete algebras up to derived equivalence, we have the following result.

\begin{thm}\label{thm:B}
Let $k$ be an algebraically closed field, and  $A$ be a derived-discrete $k$-algebra of finite global dimension. Then the category $A\mbox{-{\rm proj}}$ is $\mathbf{K}$-standard. Consequently, any derived equivalence between derived-discrete algebras of finite global dimension is standard.
\end{thm}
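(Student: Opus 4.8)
The plan is to reduce the statement to the main technical result, Theorem \ref{thm:A}, by invoking the classification of derived-discrete algebras and the derived-invariance of $\mathbf{K}$-standardness. First I would recall from \cite{BGS} the explicit classification of derived-discrete algebras up to derived equivalence. The derived-discrete algebras split into two families: those of finite global dimension and those of infinite global dimension (the latter being derived equivalent to certain self-injective, gentle-type algebras). Since our hypothesis restricts to finite global dimension, only the first family is relevant, and the classification should exhibit a list of representatives that are, up to derived equivalence, precisely the Nakayama algebras $A(r,N)$ of the form introduced before Theorem \ref{thm:A} (together with the hereditary cases of type $A_n$, which are already covered by \cite{MY}, or by the degenerate case $r=N$ handled in \cite{CY}, and can alternatively be fit into the $A(r,N)$ framework). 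The key point is that every derived-discrete algebra $A$ of finite global dimension is derived equivalent to some $A(r,N)$.

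Next I would combine this with Lemma \ref{lem:inv}. By Theorem \ref{thm:A}, the category $A(r,N)\mbox{-proj}$ is $\mathbf{K}$-standard for all $1\leq r\leq N$. Since $A(r,N)$ has finite global dimension (equal to $r+1$ when $r<N$, and the case $r=N$ being treated in \cite[Section 7]{CY}), and since any derived-discrete algebra $A$ of finite global dimension is derived equivalent to some such $A(r,N)$, Lemma \ref{lem:inv} applies: $\mathbf{K}$-standardness transfers across derived equivalences between algebras of finite global dimension. Therefore $A\mbox{-proj}$ is $\mathbf{K}$-standard. This establishes the first assertion of the theorem.

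For the final ``consequently'' clause, I would appeal to Proposition \ref{prop:stand}. Given any derived equivalence $\mathbf{D}^b(A\mbox{-mod})\rightarrow \mathbf{D}^b(B\mbox{-mod})$ between two derived-discrete algebras of finite global dimension, the source category $A\mbox{-proj}$ is $\mathbf{K}$-standard by what was just shown, so Proposition \ref{prop:stand} immediately yields that the equivalence is standard. Note that we do not even need $B$ to be of any special form here; $\mathbf{K}$-standardness of the source suffices, so the target $B$ may be arbitrary, though in our application it is again derived-discrete of finite global dimension.

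The main obstacle is not in this final deduction, which is essentially formal, but in ensuring that the classification in \cite{BGS} really does reduce every finite-global-dimension derived-discrete algebra to an $A(r,N)$ — one must check that the list of derived-equivalence representatives coincides with (or is subsumed by) this Nakayama family, and in particular that the global-dimension hypothesis precisely isolates the non-self-injective representatives. Once that bookkeeping is done, the substance of the argument is entirely carried by Theorem \ref{thm:A}, whose proof occupies Sections 4 and 5.
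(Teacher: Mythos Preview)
Your overall architecture is the same as the paper's: classify via \cite{BGS}, transfer $\mathbf{K}$-standardness along derived equivalences using Lemma~\ref{lem:inv}, and conclude with Proposition~\ref{prop:stand}. The gap is in your reading of the classification. It is \emph{not} true that every connected derived-discrete algebra of finite global dimension is derived equivalent to some $A(r,N)$. The representatives in \cite{BGS} are the path algebras of Dynkin quivers (all types, not just $A_n$) together with the three-parameter family $A(r,N,m)$ for $1\leq r<N$ and $m\geq 0$, where $m$ counts the length of the linear tail attached at the vertex $0$. Only $m=0$ gives your $A(r,N)$; the algebras $A(r,N,m)$ with $m>0$ lie in pairwise distinct derived-equivalence classes and in particular are not derived equivalent to any $A(r',N')$. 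So Theorem~\ref{thm:A} alone does not cover them, and your ``key point'' fails exactly at the bookkeeping you flagged as the main obstacle.

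The paper closes this gap with one extra input you omit: by \cite[Proof 2.4(h)]{BGS}, each $A(r,N,m)$ with $m>0$ is derived equivalent to a \emph{triangular} algebra, and the Dynkin path algebras are triangular to begin with. One then invokes \cite[Proposition~5.13]{CY}, which says that $C\mbox{-proj}$ is $\mathbf{K}$-standard for any triangular algebra $C$. Thus the dichotomy is ``triangular or $A(r,N)$ with $r<N$'', and both branches are $\mathbf{K}$-standard (the first by \cite{CY}, the second by Theorem~\ref{thm:A}); Lemma~\ref{lem:inv} then finishes. Note also that your aside about the ``degenerate case $r=N$'' is off: $A(N,N)$ is self-injective of infinite global dimension, so it neither models the hereditary $A_n$ case nor falls under Lemma~\ref{lem:inv}.
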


\begin{proof}
The second statement follows from Proposition \ref{prop:stand}. Without loss of generality, we may assume that the algebra $A$ is connected.

For the first statement, it suffices to claim that $A$ is derived equivalent to a triangular algebra $C$, or the algebra $A(r, N)$ for $1\leq r<N$. By \cite[Proposition 5.13]{CY} and Theorem \ref{thm:A}, we infer that for both $C$ and $A(r, N)$, their categories of projective modules are $\mathbf{K}$-standard. Then we are done by Lemma \ref{lem:inv}.

For the claim, we recall from \cite[Theorem 2.1]{Vos} and \cite[Proposition 2.3]{BGS} that $A$ is either derived equivalent to the path algebra of a Dynkin quiver, or to the algebra $A(r, N, m)$ given by the following quiver
\[
\xymatrix@!=12pt{
& & & & &  1 \ar[r]^-{\alpha_1} & \cdots  \ar[r]^-{\alpha_{r-2}} & r-1 \ar[dr]^-{\alpha_{r-1}}\\
-m & 1-m  \ar[l]_{\beta_{m-1}} & \cdots \ar[l] & -1 \ar[l]_{\beta_1} &  0 \ar[l]_{\beta_0}\ar[ur]^-{\alpha_0} &&&& r\ar[dl]^-{\alpha_r}\\
& & & & & N-1 \ar[ul]^-{\alpha_{N-1}} & \cdots \ar[l]^-{\alpha_{N-2}} & r+1 \ar[l]^-{\alpha_{r+1}}
}\]
with $r$ relations $\{\alpha_0\alpha_{N-1}, \alpha_1\alpha_0, \cdots, \alpha_{r-1}\alpha_{r-2}\}$, where $1\leq r<N$ and $m\geq 0$. We observe that $A(r, N, 0)=A(r, N)$. If $m>0$, the algebra $A(r, N, m)$ is derived equivalent to a triangular algebra; see \cite[Proof 2.4 (h)]{BGS}. This proves the claim.
\end{proof}

\begin{rem}
(1) Recall from \cite{KYan,BPP} that the group of standard autoequivalences on $\mathbf{D}^b(A\mbox{-mod})$ is explicitly calculated, where $A$ is a derived-discrete algebra of finite global dimension. Theorem \ref{thm:B} implies that this group coincides with the whole group of (triangle) autoequivalences on $\mathbf{D}^b(A\mbox{-mod})$.

(2) It is natural to expect that Theorem \ref{thm:B} also holds for a derived-discrete algebra of infinite global dimension. By \cite[Proposition 2.3]{BGS}, it suffices to prove that $A(r,r, m)\mbox{-proj}$ is $\mathbf{K}$-standard for $m\geq 1$. However, the argument in this paper does not apply to the algebras $A(r, r, m)$. It seems that the most difficult case is the algebra $A(1, 1, m)$.
\end{rem}

\section{The case $r=1$}

In this section, we prove Theorem \ref{thm:A} for the case $r=1$ and $N>1$.

Let $A=A(1, N)$, $\mathcal{A}=A\mbox{-proj}$ and $\mathcal{T}=\mathbf{K}^b(\mathcal{A})$.  We will use Convention ($\dag$) in Section 3. Consider the following objects in $\mathcal{T}$:
\begin{align*}
X_{m, n}& =\cdots \rightarrow 0\rightarrow P_0\rightarrow P_0\rightarrow \cdots \rightarrow P_0\rightarrow P_0\rightarrow 0\rightarrow \cdots\\
L_{m, n, a} &= \cdots \rightarrow 0\rightarrow Q_a\rightarrow P_0\rightarrow P_0\rightarrow \cdots \rightarrow P_0\rightarrow P_0 \rightarrow 0\rightarrow \cdots\\
R_{m, n, b} &= \cdots \rightarrow 0\rightarrow P_0\rightarrow P_0\rightarrow \cdots \rightarrow P_0\rightarrow P_0 \rightarrow Q_b \rightarrow 0\rightarrow \cdots\\
B_{m, n, a, b} &=\cdots \rightarrow 0\rightarrow Q_a\rightarrow P_0\rightarrow P_0\rightarrow \cdots \rightarrow P_0\rightarrow P_0 \rightarrow Q_b \rightarrow 0 \rightarrow \cdots\\
Z_{m, a, b} &=\cdots \rightarrow 0\rightarrow Q_a\rightarrow Q_b \rightarrow 0\rightarrow \cdots.
\end{align*}
Some comments on the lower indices are necessary. The supports of $X_{m, n}$, $L_{m, n, a}$ and $R_{m, n, b}$ are $[m, n]$ for $m\leq n$, where $a, b$ run from $1$ to $N-1$. We observe that $X_{m, m}=\Sigma^{-m}(P_0)$ and $L_{m, m,a}=R_{m, m, a}=\Sigma^{-m}(Q_a)$. For $m\leq n-2$ and $1\leq a, b< N$, the complex $B_{m, n, a, b} $ is defined, whose support is $[m, n]$. The complex $Z_{m, a, b}$ is supported in $[m, m+1]$, where $1\leq b<a< N$. In particular, the complex $Z_{m, a, b}$ is not defined for the case $N=2$.

\begin{lem}\label{lem:ind}
Keep the above notation. Then
$$\Lambda=\{X_{m, n}, L_{m, n, a}, R_{m, n, b}, B_{m, n, a, b}, Z_{m, a, b}\}$$
is a complete set of representatives  of isoclasses of indecomposable objects in $\mathcal{T}$.
\end{lem}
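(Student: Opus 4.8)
The plan is to classify the indecomposable objects in $\mathcal{T}=\mathbf{K}^b(A\mbox{-proj})$ for $A=A(1,N)$ by exploiting that $A$ is a Nakayama algebra of finite global dimension, so that $\mathbf{K}^b(A\mbox{-proj})\simeq \mathbf{D}^b(A\mbox{-mod})$, and the indecomposables of the latter are well understood. Concretely, since $r=1$ the only indecomposable projective-injective is $P_0$, while $Q_1,\dots,Q_{N-1}$ are the remaining indecomposable projectives; the minimal projective resolution of each simple module is short because $\mathrm{gldim}\,A=2$. My first step is therefore to describe a minimal projective presentation of every indecomposable $A$-module and, more importantly, to show that every bounded complex of projectives is homotopy equivalent to a \emph{minimal} complex (one whose differentials have entries in the radical). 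Over a Krull--Schmidt category this minimal representative is unique up to isomorphism, so counting indecomposables reduces to classifying indecomposable minimal complexes.

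The combinatorial heart of the argument is to read off which minimal complexes can occur. Using Convention $(\dag)$, the possible radical maps between the projectives are exactly $P_0\to P_0$, $Q_a\to P_0$, $P_0\to Q_a$, and $Q_a\to Q_b$ for $b<a$. I would analyze a minimal indecomposable complex degree by degree: because each $P_0$ is projective-injective and the composite of two consecutive differentials must vanish, an internal term in the complex is forced to be $P_0$ (a $Q_a$ can only appear at the two ends, as the composite through a $Q$ would fail to be zero or would split off a summand). This forces precisely the five shapes listed: a string of $P_0$'s ($X_{m,n}$), the same with a $Q_a$ grafted at the left end ($L_{m,n,a}$), at the right end ($R_{m,n,b}$), at both ends ($B_{m,n,a,b}$), and the short two-term complex $Q_a\to Q_b$ ($Z_{m,a,b}$). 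I would verify each listed complex is indecomposable by computing its endomorphism algebra and checking it is local, and verify minimality to rule out homotopy collapse.

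The remaining tasks are to confirm the objects in $\Lambda$ are pairwise non-isomorphic and that the list is exhaustive. Pairwise non-isomorphism follows from comparing supports together with the distribution of $P_0$ versus $Q_a$ entries in the (unique) minimal representatives, augmented by the cohomology dimension vectors, which are derived-equivalence invariants and separate the families. Exhaustiveness is the content of the degree-by-degree forcing argument above, closed off by induction on the length of the support: given a minimal indecomposable of support length $\geq 2$, I strip off the leftmost term and argue the neighbouring term is $P_0$, matching one of the inductive shapes.

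I expect the main obstacle to be the forcing step that interior terms must be $P_0$ and that $Q_a$'s appear only at the ends. One must handle carefully the interaction between minimality (radical differentials), the relation $\alpha_0\alpha_{N-1}=0$ encoded in Convention $(\dag)$, and the condition $d^{n+1}\circ d^n=0$: when a candidate composite $Q_a\to P_0\to Q_b$ or $Q_a\to Q_b\to P_0$ is nonzero one must show the complex splits, and when it is zero one must show a direct summand of the listed form peels off. This case analysis, while elementary, is where the boundary cases ($N=2$, so that $Z_{m,a,b}$ does not arise, and the endpoints $m=n$ collapsing $X,L,R$ onto stalk complexes) demand the most attention.
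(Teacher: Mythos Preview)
Your approach differs from the paper's. The paper simply invokes Bekkert--Merklen's classification \cite[Theorem~3]{BM}: for a gentle algebra $A$, the indecomposables in $\mathbf{K}^b(A\mbox{-proj})$ correspond, up to suspension, to generalized strings. For $A(1,N)$ every generalized string is a substring of the explicit family
\[
w_{a,b}^p=(\alpha_b\cdots\alpha_0)(\alpha_{N-1}\cdots\alpha_0)^p(\alpha_{N-1}\cdots\alpha_a),
\]
and these substrings correspond exactly to the five shapes in $\Lambda$. All the structural work is outsourced to \cite{BM}, so the paper's proof is only a few lines. Your direct route via minimal complexes would be longer but self-contained, and your key observation---that every composite of two radical maps factoring through $P_0$ vanishes while those factoring through a $Q_a$ never do---is exactly the right one.

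There is, however, a real gap. Your ``degree by degree'' forcing and the inductive ``strip off the leftmost term'' both presuppose that each term $X^n$ of a minimal indecomposable complex is a \emph{single} indecomposable projective. A priori $X^n$ might be $P_0^{\oplus 2}\oplus Q_a\oplus Q_{a'}$ with $d^n$ a matrix of radical maps; your vanishing observation then only imposes relations among matrix entries (for instance, two composites through $Q_a$ and $Q_{a'}$ can cancel), not that the complex already has string shape. To finish you must perform a matrix reduction: conjugate by automorphisms of the terms to bring the differentials to a normal form that visibly decomposes into the listed strings. This is precisely the content of \cite{BM} specialised to $A(1,N)$. For this algebra the reduction is pleasant because every radical Hom-space between indecomposable projectives is at most one-dimensional, but it is not automatic, and neither your splitting claim for interior $Q_a$'s nor your induction goes through without it.
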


\begin{proof}
By \cite[Theorem 3]{BM},  the isoclasses of  indecomposable objects in $\mathcal{T}$, up to suspensions,  are in a bijection to the generalized strings in $A$. For $1\leq a, b< N$ and $p\geq 0$, we consider the following generalized string
$$w_{a, b}^p:=w_0w_1\cdots w_p w_{p+1}=(\alpha_b\cdots \alpha_1\alpha_0) (\alpha_{N-1}\cdots \alpha_1\alpha_0)^p  (\alpha_{N-1}\cdots \alpha_{a+1}\alpha_a),$$
where $w_0=\alpha_b\cdots \alpha_1\alpha_0$, $w_1=w_2=\cdots =w_p=\alpha_{N-1}\cdots \alpha_1\alpha_0$ and $w_{p+1}=\alpha_{N-1}\cdots \alpha_{a+1}\alpha_a$ are nonzero paths in $A$. It is clear that any generalized string $w$ is a  generalized substring of  $w_{a, b}^p$, that is, of the form $w_iw_{i+1}\cdots w_j$ for some $0\leq i\leq j\leq p+1$. Then the indecomposable object $P_w$ corresponding to $w$ is isomorphic to an element in $\Lambda$; compare \cite[Definition 2]{BM}. For example, up to suspensions, the complex $B_{m, n, a, b}$ corresponds to the generalized string $w_{a, b}^{n-m-2}$.
\end{proof}

\begin{rem}\label{rem:alm}
We compute the endomorphism algebras of objects in $\Lambda$. We have ${\rm End}_\mathcal{T}(X_{m, n})=k{\rm Id}_{X_{m, n}}\oplus k\Delta_{m, n}$, where $\Delta_{m, n}$ is of the following form.
\[\xymatrix{
0\ar[r] & P_0 \ar[d]\ar[r] & P_0 \ar[d]_{0}\ar[r] & \cdots \ar[r] & P_0 \ar[d]_{0}\ar[r] & P_0\ar[d]_{0}\ar[r]  & 0\\
 0\ar[r] & P_0\ar[r]  & P_0\ar[r] &  \cdots \ar[r] & P_0\ar[r]  & P_0\ar[r] &  0
}\]
Here, the unique nonzero vertical map is the unnamed arrow $P_0\rightarrow P_0$, sending $e_0$ to $\alpha_{N-1}\cdots\alpha_1\alpha_0$.  We observe that the endomorphism $\Delta_{m, n}$ is almost-vanishing, since $\nu(X_{m, n})\simeq X_{m, n}$ with $\nu$ the Nakayama functor; see \cite[I.4.6]{Hap88}. For other objects $Y$ in $\Lambda$, we have ${\rm End}_\mathcal{T}(Y)=k{\rm Id}_Y$.

In particular, the category $\mathcal{T}$ satisfies the condition (A2) in Section 2, where $S$ is the subset of $\Lambda={\rm ind}\mathcal{T}$ consisting of the complexes $X_{m, n}$. Moreover, the isomorphism (\ref{equ:ch1}) for $X_{m, n}$ is given by
$$t_{m, n}\colon \Sigma(X_{m, n}) \longrightarrow X_{m-1, n-1},$$
where its $p$-the component  $t_{m, n}^p$ is by multiplying $(-1)^p$.  Then we have $(\ref{equ:ch2})$ for $X_{m, n}$, that is,
$$\Delta_{m-1, n-1}=t_{m, n}\circ \Sigma(\Delta_{m, n})\circ t_{m, n}^{-1}.$$
\end{rem}

In what follows, we will describe  morphisms between the objects in $\Lambda$. One might find these morphisms in \cite{ALP}, where a combinatorial description of morphisms between the indecomposables in the bounded derived category of gentle algebras is available. We try to make this exposition self-contained, and use the notation for our convenience. We divide the morphisms between objects in $\Lambda$ into four types: \emph{inclusions}, \emph{projections}, \emph{connections} and morphisms of the \emph{mixed} type.

 The first type is induced by the obvious inclusion: $i_{m, n}\colon X_{m, n}\rightarrow X_{m-1, n}$, $j_{m, n,a}\colon X_{m, n}\rightarrow L_{m-1, n, a}$, $i'_{m, n, b}\colon R_{m, n, b}\rightarrow R_{m-1, n, b}$ for $m\leq n$ and $1\leq a, b< N$; $\iota_{m, n, a, b}\colon R_{m, n, b}\rightarrow B_{m-1, n, a, b}$ for $m<n$ and $1\leq a, b< N$; $\xi_{m, a, b}\colon L_{m, m, b}\rightarrow Z_{m-1, a, b}$ for each integer  $m$ and $1\leq b<a<N$. We will call any composition of these morphisms an \emph{inclusion}, which will be denoted by ``inc".

The second type is induced by the obvious projection: $\pi_{m, n}\colon X_{m, n}\rightarrow X_{m, n-1}$, $\pi'_{m, n, a}\colon L_{m, n, a}\rightarrow L_{m, n-1, a}$ and $p_{m, n, b}\colon R_{m, n, b}\rightarrow X_{m, n-1}$ for $m<n$ and $1\leq a, b < N$; $q_{m, n, a, b}\colon B_{m, n, a, b}\rightarrow L_{m, n-1, a}$ for $m\leq n-2$ and $1\leq a, b < N$; $\zeta_{m, a, b}\colon Z_{m, a, b}\rightarrow L_{m, m, a}$ for each integer  $m$ and $1\leq b<a< N$. We will call any composition of these morphisms a \emph{projection}, which will be denoted by ``pr".

 For the third type, we denote by $c_{l, m, n, a, b}\colon L_{l, m, a}\rightarrow R_{m, n, b}$, for $l\leq m \leq n$ and $1\leq a, b< N$,  the following morphism
\[\xymatrix{
0\ar[r] & Q_a\ar[r] & P_0\ar[r] & \cdots \ar[r] & P_0\ar[d]\ar[r]  & 0\\
&             &              &               0\ar[r] & P_0\ar[r] &  \cdots \ar[r] &  P_0\ar[r] & Q_b\ar[r] &  0.
}\]
Here, as usual, the unique nonzero vertical map $P_0\rightarrow P_0$ sends $e_0$ to the path $\alpha_{N-1}\cdots \alpha_1\alpha_0$. If $l=m<n$, the vertical map is given by the unnamed arrow $Q_a\rightarrow P_0$; if $l<m=n$, it is given by $P_0\rightarrow Q_b$. If $l=m=n$, we have to assume that $b<a$, in which case we have $c_{l, l, l, a, b}=\Sigma^{-l}(\phi)$, where $\phi\colon Q_a\rightarrow Q_b$ is the unnamed arrow. We will call any morphism of the form $${\rm pr}\circ c_{l, m, n, a, b}\circ {\rm inc}$$
 a \emph{connection}. For example, the following morphism $c_{l, m, n}\colon X_{l, m}\rightarrow X_{m, n}$
\[\xymatrix{
0\ar[r] & P_0\ar[r] & P_0\ar[r] & \cdots \ar[r] & P_0\ar[d]\ar[r]  & 0\\
&             &              &               0\ar[r] & P_0\ar[r] &  \cdots \ar[r] &  P_0\ar[r] & P_0\ar[r] &  0.
}\]
is a connection for $l\leq m\leq n$. We observe that the composition
$$X_{m, n}\stackrel{\rm pr}\longrightarrow  X_{m, m} \xrightarrow{c_{m, m, n}} X_{m, n}$$
equals the almost-vanishing endomorphism $\Delta_{m, n}$ in Remark \ref{rem:alm}.

The fourth type, called the \emph{mixed} type, is  divided into $11$ classes of morphisms. They are listed as follows. Here,  ``mx" stands for the word ``mixed".
\[
\xymatrix@!=8pt{
L_{m, n, a}\ar[d]_{{\rm (mx.I)}}\colon &  0\ar[r] & Q_a \ar[r] & P_0\ar[r] & \cdots \ar[r] & P_0\ar[d] \ar[r] &P_0 \ar@{=}[d]\ar[r] & \cdots \ar[r] & P_0 \ar@{=}[d] \ar[r]  & 0 \\
L_{m', n, b}\colon & &  & & 0\ar[r] & Q_b \ar[r] & P_0\ar[r] & \cdots \ar[r] & P_0\ar[r] & 0,
}\]
for $m<m'<n$ and $1\leq a, b < N$;
\[
\xymatrix@!=8pt{
L_{m, n, a}\ar[d]_{{\rm (mx.II)}}\colon &  0\ar[r] & Q_a \ar[d]\ar[r] & P_0\ar@{=}[d]\ar[r] & \cdots \ar[r]  & P_0 \ar@{=}[d] \ar[r]  & 0 \\
L_{m, n, b}\colon & 0\ar[r] & Q_b \ar[r] & P_0\ar[r] & \cdots \ar[r]  & P_0 \ar[r]  & 0,
}\]
for $m<n$ and $1\leq b<a < N$;
\[
\xymatrix@!=8pt{
R_{m, n, a}\ar[d]_{{\rm (mx.III)}}\colon &  0\ar[r] & P_0 \ar@{=}[d]\ar[r] &  \cdots \ar[r]& P_0\ar@{=}[d]\ar[r] & Q_a\ar[d] \ar[r]  & 0 \\
R_{m, n', b}\colon & 0\ar[r] & P_0 \ar[r] & \cdots \ar[r] & P_0\ar[r] & P_0\ar[r] & \cdots \ar[r]  & P_0 \ar[r]  & Q_b\ar[r] &  0,
}\]
for $m<n<n'$ and $1\leq a, b < N$;
\[
\xymatrix@!=8pt{
R_{m, n, a}\ar[d]_{{\rm (mx.IV)}}\colon &  0\ar[r] & P_0 \ar@{=}[d]\ar[r] &  \cdots \ar[r]& P_0\ar@{=}[d]\ar[r] & Q_a\ar[d] \ar[r]  & 0 \\
R_{m, n, b}\colon & 0\ar[r] & P_0 \ar[r] & \cdots \ar[r]  & P_0 \ar[r]  & Q_b\ar[r] &  0,
}\]
for $m<n$ and $1\leq b<a < N$;
\[
\xymatrix @C=10pt{
B_{m, n, a, b} \ar[d]_{{\rm (mx.V)}} \colon & 0 \ar[r] & Q_a \ar[r]  & \cdots \ar[r] & P_0\ar[d]\ar[r] & P_0\ar@{=}[d]\ar[r] & \cdots \ar[r] & P_0\ar@{=}[d]\ar[r] &  Q_b \ar[d]\ar[r] & 0 \\
B_{m', n', a', b'} \colon  & & & 0 \ar[r] & Q_{a'} \ar[r] & P_0 \ar[r] & \cdots \ar[r] & P_0 \ar[r] & P_0\ar[r] & \cdots \ar[r] &  Q_{b'} \ar[r] & 0,
}\]
for $m<m'<n<n'$ and $1\leq a, b, a', b'< N$;
\[
\xymatrix@!=8pt{
B_{m, n, a, b} \ar[d]_{{\rm (mx.VI)}} \colon & 0 \ar[r] & Q_a\ar[d] \ar[r] & P_0\ar@{=}[d] \ar[r] & \cdots \ar[r] & P_0\ar@{=}[d]\ar[r] &  Q_b \ar[d]\ar[r] & 0 \\
B_{m, n', a', b'} \colon  &  0 \ar[r] & Q_{a'} \ar[r] & P_0 \ar[r] & \cdots \ar[r] & P_0  \ar[r]  & P_0\ar[r] & \cdots \ar[r] & Q_{b'} \ar[r] & 0,
}\]
for $m+2\leq n<n'$ and $1\leq a, b, a', b'< N$ with $a'\leq a$, where for the case $a=a'$ the unnamed arrow $Q_a\rightarrow Q_{a'}$ is the identity map; dually, we have
\[
\xymatrix@!=8pt{
B_{m, n, a, b} \ar[d]_{{\rm (mx.VII)}} \colon & 0 \ar[r] & Q_a \ar[r]  & \cdots \ar[r] & P_0\ar[d]\ar[r] & P_0\ar@{=}[d]\ar[r] & \cdots \ar[r] & P_0\ar[r] \ar@{=}[d]&Q_b \ar[d]\ar[r] & 0 \\
B_{m', n, a', b'} \colon  & & & 0 \ar[r] & Q_{a'} \ar[r] & P_0 \ar[r] & \cdots \ar[r] & P_0\ar[r] &  Q_{b'}\ar[r] & 0,
}\]
for $m<m'\leq n-2$ and $1\leq a, b, a', b'< N$ with $b'\leq b$, where for the case $b=b'$ the unnamed arrow $Q_b\rightarrow Q_{b'}$ is the identity map;
\[
\xymatrix@!=8pt{
B_{m, n, a, b} \ar[d]_{{\rm (mx.VIII)}} \colon & 0 \ar[r] & Q_a\ar[d]  \ar[r] & P_0\ar@{=}[d]\ar[r] &\cdots \ar[r] & P_0\ar@{=}[d]\ar[r]  & Q_b \ar[d]\ar[r] & 0 \\
B_{m, n, a', b'} \colon  &  0 \ar[r] & Q_{a'} \ar[r] & P_0 \ar[r] & \cdots \ar[r] & P_0\ar[r] &  Q_{b'}\ar[r] & 0,
}\]
for $m\leq n-2$, $a'\leq a$ and $b'\leq b$;
\[
\xymatrix@!=8pt{
B_{m, n, a, b} \ar[d]_{{\rm (mx.IX)}} \colon & 0 \ar[r] & Q_a  \ar[r] & P_0\ar[r] &\cdots \ar[r] & P_0\ar[d]\ar[r]  & Q_b \ar[d]\ar[r] & 0 \\
Z_{n-1, a', b'} \colon  &   & & & 0 \ar[r] & Q_{a'} \ar[r] &   Q_{b'}\ar[r] & 0,
}\]
for $m\leq n-2$ and $1\leq b'\leq b<a'< N$;
\[
\xymatrix@!=8pt{
Z_{m, a, b} \ar[d]_{{\rm (mx.X)}}\colon &  0 \ar[r] & Q_a \ar[d] \ar[r] & Q_b \ar[d] \ar[r] & 0\\
B_{m, n, a', b'}\colon & 0 \ar[r] & Q_{a'} \ar[r] & P_0 \ar[r] & \cdots \ar[r] & P_0 \ar[r] & Q_{b'} \ar[r] & 0
}\]
for $m\leq n-2$ and $1\leq b<a'\leq a<N$;
\[
\xymatrix@!=8pt{
Z_{m, a, b} \ar[d]_{{\rm (mx.XI)}}\colon &  0 \ar[r] & Q_a \ar[d] \ar[r] & Q_b \ar[d] \ar[r] & 0\\
Z_{m,  a', b'}\colon & 0 \ar[r] & Q_{a'} \ar[r]  & Q_{b'} \ar[r] & 0
}\]
for each integer $m$ and $1\leq b'\leq b<a'\leq a< N$.

We mention that a morphism of class ${\rm (mx.V)}$ is zero provided that $n=m'+1$ and $a'\leq b$. The morphisms of classes ${\rm (mx.VI)}$, ${\rm (mx.VII)}$ and ${\rm (mx.VIII)}$ might be viewed as degenerate cases of the ones of class ${\rm (mx.V)}$, by requiring that $n=n'$ or $m=m'$.

The first statement of the following result might be deduced from \cite{ALP}, while the second is essentially due to \cite[Proposition 6.2]{BPP}.

\begin{prop}\label{prop:A}
Keep the notation as above. Then the following statements hold.
\begin{enumerate}
\item
The above morphisms \begin{align*}
&\{i_{m, n}, j_{m, n, a}, i'_{m, n, b}, \iota_{m, n, a, b}, \xi_{m, a, b}\}\cup \{\pi_{m, n},\pi'_{m, n, a}, p_{m, n, b}, q_{m, n, a,b}, \zeta_{m, a,b} \}\\
&\cup \{c_{l, m, n, a, b}\}\cup \{{\rm (mx.I)-(mx.XI)}\}\end{align*}
form a spanning set of  $\mathcal{T}$.
\item For two objects $X, Y\in \Lambda$, we have ${\rm dim} \; {\rm Hom}_\mathcal{T}(X, Y)\leq 2$; moreover, the equality holds if and only if $X=Y=X_{m, n}$.
\end{enumerate}
\end{prop}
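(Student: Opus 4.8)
The plan is to establish both statements simultaneously from the combinatorial description of morphisms between string complexes over the gentle algebra $A=A(1,N)$, as developed in \cite{ALP}, and to handle the dimension count via \cite[Proposition 6.2]{BPP}. First I would dispose of the object condition in the definition of a spanning set. Each object of $\Lambda$ occurs as the domain or codomain of at least one of the named morphisms, so ${\rm obj}(\mathcal{M})=\Lambda$; by Lemma \ref{lem:ind} this is a complete set of representatives of indecomposables, and hence, by the Krull--Schmidt property of $\mathcal{T}$, every object of $\mathcal{T}$ is isomorphic to a finite direct sum of objects from ${\rm obj}(\mathcal{M})$. Thus statement (1) reduces to the morphism condition: every morphism between two objects of $\Lambda$ is a $k$-linear combination of identities and compositions of the listed generators.

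For this reduction I would invoke the combinatorial basis of ${\rm Hom}_\mathcal{T}(X,Y)$ furnished by \cite{ALP}: for string complexes over a gentle algebra, each Hom space carries a distinguished basis whose elements (the graph maps in the terminology of \cite{ALP}) correspond bijectively to the admissible overlaps of the two underlying strings. The core of the argument is then to match each such basis element with one of the named morphisms or a composition thereof, organized according to the pair of types of $(X,Y)$ among the five families $X_{m,n}$, $L_{m,n,a}$, $R_{m,n,b}$, $B_{m,n,a,b}$, $Z_{m,a,b}$. A basis element whose overlap runs through an interior $P_0$-segment factors as ${\rm pr}\circ c_{l,m,n,a,b}\circ {\rm inc}$, i.e.\ it is a connection by definition; the overlaps located at an end of a complex produce the inclusions and projections; and the residual overlaps, which meet a boundary term $Q_a$ without passing through a full $P_0$-column, are exactly the eleven mixed classes ${\rm (mx.I)}$--${\rm (mx.XI)}$. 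Confirming that this list is exhaustive, so that no basis morphism escapes the classification, is the substance of the proof.

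Statement (2) would follow from the same basis: ${\rm dim}\,{\rm Hom}_\mathcal{T}(X,Y)$ equals the number of admissible overlaps between the corresponding strings, and I would count these to see that at most two can occur. Two occur precisely for ${\rm End}_\mathcal{T}(X_{m,n})$, whose basis consists of ${\rm Id}_{X_{m,n}}$ together with the almost-vanishing endomorphism $\Delta_{m,n}$ of Remark \ref{rem:alm} (itself realized as the composition $c_{m,m,n}\circ {\rm pr}$ noted in the text); for every other pair there is at most one overlap. This enumeration is essentially \cite[Proposition 6.2]{BPP}, which I would cite for the precise count and whose conclusion is consistent with the endomorphism computation already recorded in Remark \ref{rem:alm}.

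The hard part will be the completeness of the generating list in (1): the bookkeeping needed to verify that every admissible overlap yields a composition of the listed morphisms, carried out across all ordered pairs of the five types and all relative positions of their supports. The delicate points are the degenerate configurations that must be reconciled with the overlap count so that the sharp bound in (2) is preserved---namely overlaps of length one, coincident supports, the vanishing of certain ${\rm (mx.V)}$ maps when $n=m'+1$ and $a'\leq b$, the identification of boundary arrows as identity maps in the degenerate cases ${\rm (mx.VI)}$--${\rm (mx.VIII)}$, and the absence of the complexes $Z_{m,a,b}$ when $N=2$. The mixed classes are the genuine crux, since they are exactly the basis morphisms that do not factor through a connection, and it is their correct and exhaustive identification that underlies both the spanning property and the dimension estimate.
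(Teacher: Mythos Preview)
Your approach is correct, and the paper itself acknowledges that (1) ``might be deduced from \cite{ALP}'' and that (2) ``is essentially due to \cite[Proposition 6.2]{BPP}''. However, the paper deliberately chooses a more self-contained route that avoids importing the full graph-map machinery of \cite{ALP}. Instead of matching the \cite{ALP} basis elements to the listed morphisms, the paper introduces an ad hoc notion of a \emph{crossing} morphism (a morphism $f\colon X\to Y$ whose components at the right end of ${\rm supp}(X)$ and the left end of ${\rm supp}(Y)$ cannot be made zero by homotopy) and proves two reduction lemmas: first, that for any $X,Y\in\Lambda$ one can truncate via an inclusion and a projection so that $({\rm supp}(X'),{\rm supp}(Y'))$ becomes a crossing pair of intervals, with ${\rm Hom}_\mathcal{T}(X',Y')\to{\rm Hom}_\mathcal{T}(X,Y)$ surjective; second, that any nonzero morphism between objects with crossing supports factors as ${\rm inc}\circ f'\circ{\rm pr}$ with $f'$ crossing. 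This reduces both statements to inspecting crossing morphisms in the $5\times 5=25$ cases given by the types in $\Lambda$, which is then left to the reader.

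What each approach buys: your route is conceptually cleaner if one is already fluent in \cite{ALP}, and it explains \emph{why} the basis morphisms fall into the four types (overlaps through interior $P_0$-segments versus overlaps touching a $Q$-boundary). The paper's route requires no external input beyond elementary homotopy arguments and makes the reduction mechanism---iterated truncation by width---completely explicit; its case analysis is organized by the crossing condition rather than by the \cite{ALP} overlap combinatorics, though the endpoint (a $25$-case check) is essentially the same. Either way the delicate degenerate cases you flag (the vanishing in ${\rm (mx.V)}$, the identity arrows in ${\rm (mx.VI)}$--${\rm (mx.VIII)}$, the absence of $Z_{m,a,b}$ for $N=2$) must be handled, and neither proof spells them out in full.
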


We make some preparations for the proof.  The following terminology will be convenient. For two objects $X, Y\in \Lambda$ with ${\rm supp}(X)=[m, n]$ and ${\rm supp}(Y)=[p,q]$, a nonzero morphism $f\colon X\rightarrow Y$ is said to be \emph{crossing} provided that for any morphism $g\colon X\rightarrow Y$, which is homotopic to $f$, satisfies $g^n\neq 0$ and $g^p\neq 0$. It follows that the pair $([m, n], [p, q])$ of intervals is necessarily \emph{crossing}, meaning that $m\leq p\leq n\leq q$. We call the positive integer $n-p+1$ the \emph{width} of $f$.

\begin{lem}\label{lem:A}
For any given objects $X, Y\in \Lambda$, there exist two objects $X'$ and $Y'$ in $\Lambda$ with an inclusion ${\rm inc}\colon Y'\rightarrow Y$ and a projection ${\rm pr}\colon X\rightarrow X'$ such that the pair $({\rm supp}(X'), {\rm supp}(Y'))$ is crossing and that the map is surjective
$${\rm Hom}_\mathcal{T}(X', Y')\longrightarrow {\rm Hom}_\mathcal{T}(X, Y), \quad f\mapsto {\rm inc}\circ f\circ {\rm pr}.$$
\end{lem}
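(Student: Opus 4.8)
The plan is to realise both maps $\mathrm{pr}$ and $\mathrm{inc}$ as brutal truncations and to exploit that a chain map is completely constrained by the supports of its source and target. Write ${\rm supp}(X)=[m,n]$ and ${\rm supp}(Y)=[p,q]$. Since $X^i=0$ for $i\notin[m,n]$ and $Y^i=0$ for $i\notin[p,q]$, every chain map $g\colon X\to Y$ has $g^i=0$ outside the overlap $[\max(m,p),\min(n,q)]$; in particular $g^{n'+1}=0$ and $g^{p'-1}=0$ for the integers $n'=\min(n,q)$ and $p'=\max(m,p)$ used below. We may assume this overlap is nonempty, since otherwise every component $g^i$ vanishes and ${\rm Hom}_\mathcal{T}(X,Y)=0$.

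I would take $X'$ to be the brutal truncation of $X$ keeping the terms in degrees $\le n'$, and $Y'$ the brutal truncation of $Y$ keeping the terms in degrees $\ge p'$, where $n'=\min(n,q)$ and $p'=\max(m,p)$. The discarded high-degree part of $X$ is a subcomplex, so the canonical surjection $\mathrm{pr}\colon X\to X'$ is an honest chain map; dually $Y'$ is a subcomplex of $Y$, so the canonical injection $\mathrm{inc}\colon Y'\to Y$ is an honest chain map. By construction ${\rm supp}(X')=[m,n']$ and ${\rm supp}(Y')=[p',q]$, and the inequalities $m\le p'\le n'\le q$---whose only nontrivial clause $p'\le n'$ is precisely the nonemptiness of the overlap---say exactly that the pair $({\rm supp}(X'),{\rm supp}(Y'))$ is crossing.

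The routine but unavoidable bookkeeping is to confirm that $X'$ and $Y'$ again lie in $\Lambda$ and that $\mathrm{pr}$ and $\mathrm{inc}$ are, respectively, a composition of the listed projections and a composition of the listed inclusions. This is a finite check over the five shapes in $\Lambda$: truncating from the right discards a right-hand $Q_b$ (turning $R$ into $X$ and $B$ into $L$, via $p$ and $q$) and then shortens the run of $P_0$'s (via $\pi,\pi'$), with $\zeta$ covering the two-term complex $Z$; truncating from the left is handled dually by $i,j,i',\iota$ and $\xi$. I would simply record these identifications shape by shape.

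For the surjectivity, given any $g\colon X\to Y$ I would define $f\colon X'\to Y'$ by $f^i=g^i$ for all $i$, which is legitimate because $g^i$ is supported in $[p',n']\subseteq{\rm supp}(X')\cap{\rm supp}(Y')$. That $f$ is a chain map follows from the chain-map identities for $g$; at the two new boundary degrees these read $d_Y^{n'}g^{n'}=g^{n'+1}d_X^{n'}$ and $g^{p'}d_X^{p'-1}=d_Y^{p'-1}g^{p'-1}$, whose right-hand sides vanish since $g^{n'+1}=0$ and $g^{p'-1}=0$. Comparing components degree by degree then gives $\mathrm{inc}\circ f\circ\mathrm{pr}=g$ on the nose, so the induced map on homotopy classes is surjective. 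I expect the shape-by-shape verification to be the only real obstacle; the factorisation itself is insensitive to the actual differentials and component maps, which is what makes the argument clean.
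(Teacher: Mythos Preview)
Your proof is correct and follows the same idea as the paper's: reduce to a crossing pair of supports by brutally truncating $X$ on the right and $Y$ on the left. The paper's own proof is a two-line sketch that splits into the three non-crossing configurations $p<m\le n<q$, $m<p\le q<n$, $p<m\le q<n$ and says ``truncate $X$ or $Y$'' in each; your choice $n'=\min(n,q)$, $p'=\max(m,p)$ handles all three at once and in addition supplies the explicit chain-level lift $f^i=g^i$ for surjectivity, which the paper leaves implicit. Both arguments share the same harmless lacuna: when the supports are disjoint the conclusion is interpreted as vacuous because ${\rm Hom}_\mathcal{T}(X,Y)=0$.
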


\begin{proof}
We may assume that ${\rm Hom}_\mathcal{T}(X, Y)\neq 0$ and that $({\rm supp}(X), {\rm supp}(Y))$ is not crossing. Assume that ${\rm supp}(X)=[m, n]$ and ${\rm supp}(Y)=[p,q]$. Then there are three cases: $p<m\leq n<q$, or $m<p\leq q<n$, or $p<m\leq q<n$. In each case, we truncate $X$ or $Y$ to obtain the desired $X'$ or $Y'$.
\end{proof}

\begin{lem}
Assume that $X, Y\in \Lambda$ satisfy that the pair $({\rm supp}(X), {\rm supp}(Y))$ is  crossing. Let $f\colon X\rightarrow Y$ be a nonzero morphism in $\mathcal{T}$. Then there exist two objects $X'$ and $Y'$ in $\Lambda$ with an inclusion ${\rm inc}\colon Y'\rightarrow Y$, a projection ${\rm pr}\colon X\rightarrow X'$ and a crossing morphism $f'\colon X'\rightarrow Y'$ satisfying $f={\rm inc}\circ f'\circ {\rm pr}$ in $\mathcal{T}$.
\end{lem}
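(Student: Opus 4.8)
The plan is to replace $f$ by a representative chain map supported exactly on the "effective overlap" of the two supports, chosen so that its extreme components survive every homotopy. Write ${\rm supp}(X)=[m,n]$ and ${\rm supp}(Y)=[p,q]$, so that $m\leq p\leq n\leq q$ by hypothesis. Since a component $g^i\colon X^i\to Y^i$ of any chain map $g\colon X\to Y$ automatically vanishes whenever $X^i=0$ or $Y^i=0$, every chain map homotopic to $f$ is supported inside $[m,n]\cap[p,q]=[p,n]$. In particular the top (largest) and bottom (smallest) nonzero degrees of such representatives range over the finite set $[p,n]$, so extremal choices below make sense.

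First I would fix the representative. Among all chain maps homotopic to $f$, choose $g$ whose top degree $v$ is as small as possible; then, among all such $g$ (equivalently, all representatives supported in degrees $\leq v$), choose one whose bottom degree $u$ is as large as possible. Since $f\neq 0$ there is at least one nonzero representative, and both optima exist in $[p,n]$, giving $p\leq u\leq v\leq n$. Let $X'$ be the truncation of $X$ keeping degrees $\leq v$ and $Y'$ the truncation of $Y$ keeping degrees $\geq u$. Inspecting the five families defining $\Lambda$ shows routinely that $X'$ and $Y'$ again lie in $\Lambda$, that the canonical projection ${\rm pr}\colon X\to X'$ and inclusion ${\rm inc}\colon Y'\to Y$ are compositions of the named morphisms $\pi,\pi',p,q,\zeta$ and $i,j,i',\iota,\xi$, and that the boundary relations (using $g^{v+1}=0=g^{u-1}$ in the chain-map identity for $g$) make the components $g^u,\dots,g^v$ into a genuine chain map $f'\colon X'\to Y'$. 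As $g$ is supported in $[u,v]$, we get the equality of chain maps ${\rm inc}\circ f'\circ{\rm pr}=g$, hence $f={\rm inc}\circ f'\circ{\rm pr}$ in $\mathcal{T}$.

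It remains to show $f'$ is crossing, which is the heart of the matter. Here ${\rm supp}(X')=[m,v]$ has top $v$ and ${\rm supp}(Y')=[u,q]$ has bottom $u$, so I must show every $g''\simeq f'$ satisfies $(g'')^v\neq 0$ and $(g'')^u\neq 0$. If some $g''\colon X'\to Y'$ homotopic to $f'$ had $(g'')^v=0$, then $g''$ is supported in degrees $\leq v-1$, and since composition respects homotopy, ${\rm inc}\circ g''\circ{\rm pr}$ is a chain map $X\to Y$ supported in degrees $\leq v-1$ and homotopic to ${\rm inc}\circ f'\circ{\rm pr}=g\simeq f$; this contradicts the minimality of $v$. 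Dually, if some $g''\simeq f'$ had $(g'')^u=0$, then ${\rm inc}\circ g''\circ{\rm pr}$ is a representative of $f$ supported in degrees $\leq v$ with bottom degree $>u$, contradicting the maximality of $u$ among representatives supported in degrees $\leq v$. Thus $f'$ is crossing, and $([m,v],[u,q])$ is a crossing pair.

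The main obstacle I anticipate is not conceptual but a matter of simultaneity: one must ensure that a \emph{single} representative realizes both the minimal top degree and the maximal bottom degree. This is exactly why the two extremal choices are made in sequence — minimizing $v$ over all representatives, then maximizing $u$ only among representatives already supported in degrees $\leq v$ — so that the two contradiction arguments do not interfere. The remaining points, namely that each truncation stays within $\Lambda$ and that ${\rm pr}$ and ${\rm inc}$ decompose into the named elementary inclusions and projections, are routine case distinctions across the families $X,L,R,B,Z$ and I would dispatch them by direct inspection rather than a uniform argument.
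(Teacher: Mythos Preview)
Your argument is correct and follows essentially the same idea as the paper's proof: both exploit that a non-crossing morphism has a representative vanishing at one endpoint of the overlap, allowing truncation. The paper phrases this as an induction on the width $n-p+1$, truncating one side at a time, whereas you collapse the iteration into a single extremal choice (minimize the top degree, then maximize the bottom degree) and verify the result is crossing by contradiction; the two arguments are interchangeable reformulations of the same mechanism.
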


\begin{proof}
If $f$ is already crossing, we take $X'=X$ and $Y=Y'$. Otherwise, we assume that ${\rm supp}(X)=[m, n]$ and ${\rm supp}(Y)=[p,q]$. Up to homotopy, we have $f^n=0$ or $f^p=0$. In the first case, we truncate $X$ to obtain $X'$; in the latter case, we truncate $Y$. Then we are done by induction on the width.
\end{proof}

We now sketch a proof for Proposition \ref{prop:A}.

\begin{proof}
For (1), we apply the above two lemmas. It suffices to prove the following claim: up to nonzero scalars,  any crossing morphism $f\colon X\rightarrow Y$ is a composition of the listed morphisms. If the width of $f$ is one, then up to nonzero scalars, $f$ is a connection, and then we are done. We may assume that the width of $f$ is at least two. We have to analyse such morphisms between objects in $\Lambda$ case by case; since $\Lambda$ is divided into $5$ classes, we have here $25$ cases to verify the claim. But each case is easy to verify.  We omit the details.

For (2), we apply Lemma \ref{lem:A}. We may assume that the pair $({\rm supp}(X), {\rm supp}(Y))$ is crossing. It suffices to claim  ${\rm dim} \;{\rm Hom}_\mathcal{T}(X, Y)\leq 2$, and the equality holds if and only if $X=Y=X_{m, n}$. The claim implies the desired results, since ${\rm End}_\mathcal{T}(X_{m, n})=k{\rm Id}_{X_{n, m}}\oplus k\Delta_{m, n}$ with $\Delta_{m, n}$ an almost-vanishing morphism.  The proof of the claim is done by computing the Hom-spaces ${\rm Hom}_\mathcal{T}(X, Y)$ for the $25$ cases, under the assumption that $({\rm supp}(X), {\rm supp}(Y))$ is crossing.
\end{proof}

We are in a position to prove Theorem \ref{thm:A} for the case $r=1$. We will often use Lemma \ref{lem:adjust} to adjust the pseudo-identities on $\mathcal{T}$.

In what follows, if we have $F(f)=f$ for an endofunctor $F$ on $\mathcal{T}$ and a   morphism $f$ in $\mathcal{T}$, then we say that $F$ acts trivially on $f$, or acts on $f$ by the identity.

\begin{proof}
Let $(F, \omega)$ be a pseudo-identity on $\mathcal{T}$. We observe first that $F$ acts on the morphisms in Proposition \ref{prop:A}(1) by nonzero scalars.

We will adjust $F$ by nonzero scalars to obtain a new pseudo-identity $(\bar{F}, \bar{\omega})$ such that $\bar{F}$ acts trivially on the spanning set in Proposition \ref{prop:A}(1). By Lemma \ref{lem:span}, as a triangle functor,  $(\bar{F}, \bar{\omega})$ is isomorphic to $({\rm Id}_\mathcal{T}, \omega')$ for some natural automorphism $\omega'$ on $\Sigma$.

By Remark \ref{rem:alm},  Proposition \ref{prop:A2} applies to $\mathcal{T}$. By Proposition \ref{prop:A2}(2), the triangle functor  $({\rm Id}_\mathcal{T}, \omega')$ is isomorphic to the identity functor. This proves that, as a triangle functor, the given $(F, \omega)$ is isomorphic to the identity functor. Then $\mathcal{A}$ is $\mathbf{K}$-standard. In view of Proposition \ref{prop:A2}(1), the homomorphism (\ref{equ:res}) is not injective, since the $\Sigma$-orbit set $S/\Sigma$ is infinite. This also follows from \cite[Proposition 4.5]{Bo}. Here, we use the fact that the center $Z(\mathcal{A})$ is isomorphic to the center of the algebra $A$, which is isomorphic to the algebra of dual numbers.  Then the category $\mathcal{A}$ is not strongly $\mathbf{K}$-standard. We are done.

In what follows, we will adjust the given pseudo-identity $(F, \omega)$ in Step 1. In Step 2-4, we show that the adjusted pseudo-identity acts trivially on the spanning set  in Proposition \ref{prop:A}(1). By Lemma \ref{lem:normal}, we may assume that the given pseudo-identity $(F, \omega)$ is normalized.

\emph{Step 1}\;  For each $X\in \Lambda$ with ${\rm supp}(X)=[m, n]$, we denote by ${\rm inc}_X\colon \Sigma^{-n}(X^n)\rightarrow X$ the obvious inclusion. If $m=n$, ${\rm inc}_X$ is the identity map. We may assume that $F({\rm inc}_X)=\phi(X) {\rm inc}_X$ with $\phi(X)$ a nonzero scalar.

 For each complex $Y$ in $\mathcal{T}$, we define an automorphism $\delta_Y\colon Y\rightarrow Y$ such that $\delta_X=\phi(X)^{-1}{\rm Id}_X$ for $X\in \Lambda$ and $\delta_Z={\rm Id}_Z$ for stalk complexes $Z$. Using $\delta$ as the adjusting isomorphisms, we obtain a new normalized pseudo-identity $(F', \omega')$ which is isomorphic to $(F, \omega)$  as triangle functors,  and which satisfies $F'({\rm inc}_X)={\rm inc}_X$. We claim that $F'(i_{m, n})=i_{m, n}$. Indeed, this follows by applying $F'$ to
 $$i_{m, n}\circ {\rm inc}_{X_{m, n}}={\rm inc}_{X_{m-1, n}}.$$
 By the same argument, we infer that $F'$ acts on  $\{j_{m, n, a}, i'_{m, n, b}, \iota_{m, n, a, b}, \xi_{m, a, b}\}$ by the identity. Therefore,  $F'$ acts on all inclusions by the identity.

\emph{Step 2}\; By Step 1, up to adjustment,  we  may assume that the normalized pseudo-identity $(F, \omega)$ acts trivially on all the inclusions. For $m<n$, we consider the connection $c_{m, n-1, n-1}\colon X_{m, n-1}\rightarrow X_{n-1, n-1}$. We observe that
\begin{align}\label{equ:1}
F(c_{m, n-1, n-1})=c_{m, n-1, n-1}.
\end{align}
We denote by $\phi\colon P_0\rightarrow P_0$ the unique morphism with $\phi(e_0)=\alpha_{N-1}\cdots\alpha_1\alpha_0$. We have that $\Sigma^{1-n}(\phi)$ equals the composition $$X_{n-1,n-1}\xrightarrow{\rm inc} X_{m, n-1}\xrightarrow{c_{m, n-1, n-1}} X_{n-1, n-1}.$$
 By $F\Sigma^{1-n}(\phi)=\Sigma^{1-n}(\phi)$, we infer (\ref{equ:1}).

 We assume that $F(\pi_{m, n})=\lambda \pi_{m, n}$ for a nonzero scalar $\lambda$. We consider the following exact triangle in $\mathcal{T}$
\begin{align}\label{equ:2}
X_{n, n} \xrightarrow{\rm inc} X_{m, n}\xrightarrow{\pi_{m, n}}  X_{m, n-1} \xrightarrow{c_{m, n-1, n-1}} X_{n-1, n-1}.
\end{align}
Applying $(F, \omega)$ to this triangle and using the above observation, we have an exact triangle
\begin{align*}
X_{n, n} \xrightarrow{\rm inc} X_{m, n}\xrightarrow{\lambda\pi_{m, n}}  X_{m, n-1} \xrightarrow{c_{m, n-1, n-1}} X_{n-1, n-1}.
\end{align*}
Here, we use the fact that $\omega_{X_{n, n}}$ is the identity, since $(F, \omega)$ is normalized.  By Lemma \ref{lem:scalar}, we infer that $\lambda=1$ and thus $F(\pi_{m, n})=\pi_{m, n}$ for any $m<n$.

\emph{Step 3}\;  We claim that $F$ acts trivially on $\{\pi'_{m, n, a}, p_{m, n, b}, q_{m, n, a,b}, \zeta_{m, a,b}\}$. Consequently, $F$ acts trivially on all the projections.

To see $F(\pi'_{m, n, a})=\pi'_{m, n, a}$, we apply $(F, \omega)$ to the following exact triangle
\begin{align*}
X_{n, n} \xrightarrow{\rm inc} L_{m, n, a}\xrightarrow{\pi'_{m, n, a}}  L_{m, n-1, a} \xrightarrow{c_1} X_{n-1, n-1},
\end{align*}
where $c_1$ is the obvious connection. Similar to (\ref{equ:1}), we have $F(c_1)=c_1$. Applying Lemma \ref{lem:scalar} to the resulted exact triangle, we have the required identity.

 To see $F(p_{m, n, b})=p_{m, n, b}$, we apply $(F, \omega)$ to the following exact triangle
\begin{align*}
L_{n, n, b} \xrightarrow{\rm inc} R_{m, n, b}\xrightarrow{p_{m, n, b}}  X_{m, n-1} \xrightarrow{c_2} L_{n-1, n-1, b},
\end{align*}
where $c_2$ is the obvious connection. We have $F(c_2)=c_2$, which is similar to (\ref{equ:1}). Then the resulted exact triangle is as follows
\begin{align*}
L_{n, n, b} \xrightarrow{\rm inc} R_{m, n, b}\xrightarrow{F(p_{m, n, b})}  X_{m, n-1} \xrightarrow{c_2} L_{n-1, n-1, b},
\end{align*}
Here, we use the fact that $\omega_{L_{n, n, b}}$ is the identity. Recall that  $F(p_{m, n, b})$ differs from $p_{m, n, b}$ by a  nonzero scalar. Applying Lemma \ref{lem:scalar}, we are done.

Similarly, we apply $(F, \omega)$ to
\begin{align*}
L_{n, n, b} \xrightarrow{\rm inc} B_{m, n, a, b}\xrightarrow{q_{m, n, a, b}}  L_{m, n-1, a} \xrightarrow{c_{m, n-1,n-1, a, b}} L_{n-1, n-1, b},
\end{align*}
and
\begin{align*}
L_{m+1, m+1, b} \xrightarrow{\rm inc} Z_{m, a, b}\xrightarrow{\zeta_{m, a, b}}  L_{m, m, a} \xrightarrow{c_{m, m, m, a, b}} L_{m, m, b},
\end{align*}
to deduce $F(q_{m, n, a, b})=q_{m, n, a, b}$ and $F(\zeta_{m, a, b})=\zeta_{m, a, b}$, respectively.  This completes the proof of the claim.

We observe that $F(c_{l, m, n, a, b})=c_{l, m, n, a, b}$. The case $m=n$ is similar to (\ref{equ:1}). Assume that $m<n$. We apply $F$ to the composition
$$\Sigma^{-m}(\phi)=X_{m, m}\xrightarrow{\rm inc} L_{l, m, a}\xrightarrow{c_{l, m, n, a,b}} R_{m,n, b} \xrightarrow{\rm pr} X_{m, m}.$$
Here, $\phi$ denotes the unnamed arrow $P_0\rightarrow P_0$. We have $F\Sigma^{-m}(\phi)=\Sigma^{-m}(\phi)$.  Since $F$ acts trivially on all the inclusions and projections, we infer the required identity. Consequently, $F$ acts trivially on all the connections.

\emph{Step 4}\; We keep the assumptions in Step 2. We claim that $F(f)=f$ for any morphism $f$ of the mixed type. Consequently, $F$ acts trivially on the spanning set in Proposition \ref{prop:A}(1), as required.

We verify the claim for the $11$ classes of morphisms. The idea is to compose those morphisms with suitable inclusions,  and to use the fact that $F$ acts trivially on inclusions and connections.

Consider the morphism $L_{m, n, a}\rightarrow L_{m', n, b}$ of class ${\rm (mx.I)}$. We apply $F$ to the following commutative diagram
\begin{align}\label{equ:3}
\xymatrix{
X_{n, n} \ar@{=}[d] \ar[r]^-{\rm inc} & L_{m, n, a} \ar[d]^-{\rm (mx.I)}\\
X_{n, n} \ar[r]^-{\rm inc} & L_{m', n, b},
}
\end{align}
and deduce that $F$ acts trivially on the required morphism. The same argument works for ${\rm (mx.II)}$. For a morphism of class ${\rm (mx.III)}$, we apply $F$ to the following commutative diagram
\begin{align*}
\xymatrix{
L_{n, n, a} \ar[d]_-{c_{n, n, n', a, b}} \ar[r]^-{\rm inc} & R_{m, n, a} \ar[d]^-{\rm (mx.III)}\\
R_{n, n', b} \ar[r]^-{\rm inc} & R_{m, n', b},
}
\end{align*}
and use $F(c_{n, n, n', a, b})=c_{n, n, n', a, b}$. Here, we have to observe that the composition in the diagram is nonzero. The same argument works for all the remaining $8$ cases.
\end{proof}

\section{The case $r>1$}

In this section, we prove Theorem \ref{thm:A} for the case $r>1$ and $N>r$. The idea of the proof is the same as in Section 4, so we omit some details.

Let $A=A(r, N)$, $\mathcal{A}=A\mbox{-proj}$ and $\mathcal{T}=\mathbf{K}^b(\mathcal{A})$.  We use Convention ($\dag$) in Section 3. In particular, the lower indices $s$ of $P_s$ are taken in $\mathbb{Z}/r\mathbb{Z}$.  We consider the following objects in $\mathcal{T}$:
\begin{align*}
X_{s, m, n}& =\cdots \rightarrow 0\rightarrow P_s\rightarrow P_{s-1}\rightarrow \cdots \rightarrow P_{s-n+m+1}\rightarrow P_{s-n+m}\rightarrow 0\rightarrow \cdots\\
L_{m, n, a} &= \cdots \rightarrow 0\rightarrow Q_a\rightarrow P_{r-1}\rightarrow P_{r-2}\rightarrow \cdots \rightarrow P_{r-n+m+1}\rightarrow P_{r-n+m} \rightarrow 0\rightarrow \cdots\\
R_{m, n, b} &= \cdots \rightarrow 0\rightarrow P_{n-m-1}\rightarrow P_{n-m-2}\rightarrow \cdots \rightarrow P_1\rightarrow P_0 \rightarrow Q_b \rightarrow 0\rightarrow \cdots\\
B_{m, n, a, b} &=\cdots \rightarrow 0\rightarrow Q_a\rightarrow P_{r-1}\rightarrow P_{r-2}\rightarrow \cdots \rightarrow P_1\rightarrow P_0 \rightarrow Q_b \rightarrow 0 \rightarrow \cdots\\
Z_{m, a, b} &=\cdots \rightarrow 0\rightarrow Q_a\rightarrow Q_b \rightarrow 0\rightarrow \cdots.
\end{align*}
The supports of $X_{s, m, n}$, $L_{m, n, a}$ and $R_{m, n, b}$ are $[m, n]$ for $m\leq n$, where $s$ lies in $\mathbb{Z}/{r\mathbb{Z}}$, and $a, b$ run from $r$ to $N-1$. We observe that $X_{s, m, m}=\Sigma^{-m}(P_s)$ and $L_{m, m,a}=R_{m, m, a}=\Sigma^{-m}(Q_a)$. For $m< n-r$ with $r|(n-m-1)$ and $r\leq a, b< N$, the complex $B_{m, n, a, b} $ is defined, whose support is $[m, n]$. The complex $Z_{m, a, b}$ is supported in $[m, m+1]$, where $r\leq b<a< N$.

The following result is analogous to Lemma \ref{lem:ind}.

\begin{lem}
Keep the above notation. Then
$$\Lambda=\{X_{s, m, n}, L_{m, n, a}, R_{m, n, b}, B_{m, n, a, b}, Z_{m, a, b}\}$$
is a complete set of representatives  of isoclasses of indecomposable objects in $\mathcal{T}$. Moreover, for each object $Y$ in $\Lambda$, we have ${\rm End}_\mathcal{T}(Y)=k{\rm Id}_Y$. \hfill $\square$
\end{lem}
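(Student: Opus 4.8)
The plan is to follow the two-part structure of Lemma \ref{lem:ind} together with Remark \ref{rem:alm}, while isolating the one genuinely new feature of the case $r>1$: there are no almost-vanishing endomorphisms, so that every indecomposable has endomorphism algebra $k$. Concretely, the first statement is proved exactly as Lemma \ref{lem:ind}, and the second statement replaces the dichotomy of Remark \ref{rem:alm} (where the complexes $X_{m,n}$ carried an almost-vanishing $\Delta_{m,n}$) by the uniform conclusion ${\rm End}_\mathcal{T}(Y)=k\,{\rm Id}_Y$.

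For the classification I would invoke the Bekkert--Merklen bijection \cite[Theorem 3]{BM} between the isoclasses of indecomposables in $\mathcal{T}$, up to suspension, and the generalized strings in $A=A(r,N)$, precisely as in Lemma \ref{lem:ind}. The only adjustment is the choice of maximal strings: the elementary block $\alpha_{N-1}\cdots\alpha_1\alpha_0$ used when $r=1$ is now zero, as it contains the relation $\alpha_1\alpha_0$, and must be replaced by the longest nonzero paths, namely those whose interior vertices all lie in $\{r,\dots,N-1\}$. Concatenating these produces a family of maximal generalized strings of which every generalized string is a generalized substring; a direct inspection then matches each corresponding indecomposable $P_w$ with one of the five shapes in $\Lambda$. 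The cyclic wrapping $P_s\to P_{s-1}\to\cdots$ modulo $r$ accounts for the extra index $s$ on $X_{s,m,n}$, and the constraints on the parameters (such as $r\mid (n-m-1)$ for $B_{m,n,a,b}$) are exactly the conditions that keep the strings nonzero.

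For the endomorphism statement, the decisive input is the module-level fact that ${\rm End}_A(M)=k\,{\rm Id}_M$ for every indecomposable projective $M\in\{P_0,\dots,P_{r-1},Q_r,\dots,Q_{N-1}\}$. Indeed ${\rm End}_A(Ae_i)=e_iAe_i$, and a nonzero path from a vertex $i$ back to itself must be a full cycle; for $r\geq 2$ such a cycle has an interior vertex $j\in\{0,\dots,r-1\}$, around which it contains one of the quadratic relations $\alpha_j\alpha_{j-1}$, hence is zero. Thus $e_iAe_i=ke_i$. This is exactly the point that fails when $r=1$, where the surviving loop at the vertex $0$ produces the socle map and the almost-vanishing $\Delta_{m,n}$ of Remark \ref{rem:alm}. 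Granting this, fix $Y\in\Lambda$ and a chain endomorphism $f\colon Y\to Y$. Each component $f^j$ is an endomorphism of an indecomposable projective, hence a scalar $c_j\,{\rm Id}$; the chain-map identity $f^{j+1}d_Y^j=d_Y^jf^j$ forces $(c_{j+1}-c_j)d_Y^j=0$, and since every differential $d_Y^j$ of the complexes in $\Lambda$ is a nonzero unnamed arrow, all the $c_j$ coincide. Hence the chain endomorphisms of $Y$ form the line $k\,{\rm Id}_Y$; as $Y$ is indecomposable and nonzero, no nonzero scalar is null-homotopic, and therefore ${\rm End}_\mathcal{T}(Y)=k\,{\rm Id}_Y$.

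The main obstacle is not this propagation from modules to complexes, which is short, but the two bookkeeping-heavy verifications: choosing the maximal strings correctly and confirming that precisely the five shapes of $\Lambda$ occur, and establishing the module fact ${\rm End}_A(P_s)=k$ for the projective--injectives. The latter is the conceptual heart, since it records the absence of the self-map present when $r=1$; as a consistency check one may note that for $r>1$ the Nakayama functor $\nu$ permutes $\{P_0,\dots,P_{r-1}\}$ nontrivially, so that no $X_{s,m,n}$ satisfies $\nu X\simeq X$ and hence none lies in an almost split triangle of the form $X\to X$ --- in harmony with the direct computation above. Either way one obtains that the subset $S$ of (A1) is empty, which is what upgrades $\mathbf{K}$-standardness to strong $\mathbf{K}$-standardness in Theorem \ref{thm:A}(2).
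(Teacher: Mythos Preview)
Your proposal is correct and matches the paper's intended approach: the paper omits the proof entirely (the $\square$ after the statement, together with the remark that the result is analogous to Lemma \ref{lem:ind}, signals that the argument is left to the reader), and your write-up supplies precisely the expected details. The classification part via \cite[Theorem 3]{BM} is exactly the method of Lemma \ref{lem:ind}, adapted to the new maximal strings; the endomorphism part via the module-level identity $e_iAe_i=ke_i$ for $r\geq 2$, followed by the scalar-propagation argument along the nonzero differentials, is the natural direct computation in the spirit of Remark \ref{rem:alm}, now yielding the uniform answer $k\,{\rm Id}_Y$ because the full-cycle path at any vertex contains at least one of the $r\geq 2$ quadratic relations. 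Your Nakayama-functor consistency check (that $\nu$ cyclically permutes $P_0,\dots,P_{r-1}$, so no $X_{s,m,n}$ is $\nu$-fixed and hence none carries an almost-vanishing endomorphism) is a correct and pleasant complement, though not needed for the proof.
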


There are four types of morphisms between objects in $\Lambda$: \emph{inclusions}, \emph{projections}, \emph{connections} and morphisms of the \emph{mixed} type; compare \cite{ALP} and Section 4.

The first type is induced by the obvious inclusion:
$i_{s, m, n}\colon X_{s, m, n}\rightarrow X_{s+1, m-1, n}$, $j_{m, n,a}\colon X_{r-1, m, n}\rightarrow L_{m-1, n, a}$, $i'_{m, n, b}\colon R_{m, n, b}\rightarrow R_{m-1, n, b}$ for $s\in \mathbb{Z}/{r\mathbb{Z}}$, $m\leq n$ and $r \leq a, b< N$; $\iota_{m, n, a, b}\colon R_{m, n, b}\rightarrow B_{m-1, n, a, b}$ for $m\leq n-r$ with $r|(n-m)$ and $r\leq a, b< N$; $\xi_{m, a, b}\colon L_{m, m, b}\rightarrow Z_{m-1, a, b}$ for each integer $m$ and $r\leq b<a<N$. We will call any composition of these morphisms an \emph{inclusion}, which will be denoted by ``inc".

The second type is induced by the obvious projection: $\pi_{s, m, n}\colon X_{s, m, n}\rightarrow X_{s, m, n-1}$, $\pi'_{m, n, a}\colon L_{m, n, a}\rightarrow L_{m, n-1, a}$ and $p_{m, n, b}\colon R_{m, n, b}\rightarrow X_{n-m-1, m, n-1}$ for $s\in \mathbb{Z}/{r\mathbb{Z}}$, $m<n$ and $r\leq a, b < N$; $q_{m, n, a, b}\colon B_{m, n, a, b}\rightarrow L_{m, n-1, a}$ for $m< n-r$ with $r|(m-n-1)$  and $r\leq a, b < N$; $\zeta_{m, a, b}\colon Z_{m, a, b}\rightarrow L_{m, m, a}$ for each integer $m$ and $r\leq b<a< N$. We will call any composition of these morphisms a \emph{projection}, which will be denoted by ``pr".

 For the third type, we denote by $c_{l, m, n, a, b}\colon L_{l, m, a}\rightarrow R_{m, n, b}$  the following morphism
\[\xymatrix{
0\ar[r] & Q_a\ar[r] & P_{r-1}\ar[r] & \cdots \ar[r] & P_{r-m+l}\ar[d]\ar[r]  & 0\\
&             &              &               0\ar[r] & P_{n-m-1} \ar[r] &  \cdots \ar[r] &  P_0\ar[r] & Q_b\ar[r] &  0.
}\]
for $l\leq m \leq n$  satisfying  $r|(n-l)$ and $r\leq a, b< N$. Here, the unique nonzero vertical map is the unnamed arrow $P_{r-m+l}\rightarrow P_{n-m-1}$, where we use the fact $r|(n-l)$. If $l=m<n$, the vertical map is given by  $Q_a\rightarrow P_{r-1}$; if $l<m=n$, it is given by $P_0\rightarrow Q_b$. If $l=m=n$, we have to assume that $b<a$, in which case we have $c_{l, l, l, a, b}=\Sigma^{-l}(\phi)$, where $\phi\colon Q_a\rightarrow Q_b$ is the unnamed arrow. We will call any morphism of the form $${\rm pr}\circ c_{l, m, n, a, b}\circ {\rm inc}$$
 a \emph{connection}.

The fourth type, called the \emph{mixed} type, is  divided into $11$ classes of morphisms. They are listed as follows.
\[
\xymatrix@!=8pt{
L_{m, n, a}\ar[d]_{{\rm (mx.I)}}\colon &  0\ar[r] & Q_a \ar[r] & P_{r-1}\ar[r] & \cdots \ar[r] & P_0\ar[d] \ar[r] &P_{r-1} \ar@{=}[d]\ar[r] & \cdots \ar[r] & P_{m-n} \ar@{=}[d] \ar[r]  & 0 \\
L_{m', n, b}\colon & &  & & 0\ar[r] & Q_b \ar[r] & P_{r-1}\ar[r] & \cdots \ar[r] & P_{m'-n}\ar[r] & 0,
}\]
for $m<m'<n$ with $r|(m'-m)$  and $r\leq a, b < N$;
\[
\xymatrix@!=10pt{
L_{m, n, a}\ar[d]_{{\rm (mx.II)}}\colon &  0\ar[r] & Q_a \ar[d]\ar[r] & P_{r-1}\ar@{=}[d]\ar[r] & P_{r-2}\ar@{=}[d]\ar[r] &  \cdots \ar[r]  & P_{m-n} \ar@{=}[d] \ar[r]  & 0 \\
L_{m, n, b}\colon & 0\ar[r] & Q_b \ar[r] & P_{r-1}\ar[r] & P_{r-2}\ar[r] & \cdots \ar[r]  & P_{m-n} \ar[r]  & 0,
}\]
for $m<n$ and $r\leq b<a < N$;
\[
\xymatrix @C=10pt{
R_{m, n, a}\ar[d]_{{\rm (mx.III)}}\colon &  0\ar[r] & P_{n-m-1} \ar@{=}[d]\ar[r] &  \cdots \ar[r] & P_1 \ar@{=}[d] \ar[r]& P_0\ar@{=}[d]\ar[r] & Q_a\ar[d] \ar[r]  & 0 \\
R_{m, n', b}\colon & 0\ar[r] & P_{n'-m-1} \ar[r] & \cdots \ar[r] & P_1 \ar[r] & P_0\ar[r] & P_{r-1}\ar[r] & \cdots \ar[r]  & P_0 \ar[r]  & Q_b\ar[r] &  0,
}\]
for $m<n<n'$  with $r|(n'-n)$  and $r\leq a, b < N$;
\[
\xymatrix @C=10pt{
R_{m, n, a}\ar[d]_{{\rm (mx.IV)}}\colon &  0\ar[r] & P_{n-m-1} \ar@{=}[d]\ar[r] &  \cdots \ar[r]& P_1 \ar@{=}[d]\ar[r] & P_0\ar@{=}[d]\ar[r] & Q_a\ar[d] \ar[r]  & 0 \\
R_{m, n, b}\colon & 0\ar[r] & P_{n-m-1} \ar[r] & \cdots \ar[r]   & P_1\ar[r] &  P_0 \ar[r]  & Q_b\ar[r] &  0,
}\]
for $m<n$ and $r\leq b<a < N$;
\[
\xymatrix @C=10pt{
B_{m, n, a, b} \ar[d]_{{\rm (mx.V)}} \colon  & 0 \ar[r] & Q_a  \ar[r] &  \cdots \ar[r] & P_0\ar[d]\ar[r] & P_{r-1}\ar@{=}[d]\ar[r] & \cdots \ar[r]&  P_0 \ar@{=}[d]\ar[r] & Q_b \ar[d]\ar[r] & 0 \\
B_{m', n', a', b'} \colon  &  & &   0 \ar[r] & Q_{a'} \ar[r] & P_{r-1} \ar[r] & \cdots \ar[r] &  P_0\ar[r] & P_{r-1} \ar[r] & \cdots \ar[r] &  Q_{b'} \ar[r] & 0,
}\]
for $m<m'<n<n'$  with $r|(m'-m)$,  $r|(n'-n)$ and $r\leq a, b, a', b'< N$;
\[
\xymatrix @C=10pt{
B_{m, n, a, b} \ar[d]_{{\rm (mx.VI)}} \colon & 0 \ar[r] & Q_a\ar[d] \ar[r] & P_{r-1}\ar@{=}[d] \ar[r] & \cdots \ar[r] & P_0\ar[r] \ar@{=}[d] &Q_b \ar[d]\ar[r] & 0 \\
B_{m, n', a', b'} \colon  &  0 \ar[r] & Q_{a'} \ar[r] & P_{r-1} \ar[r] & \cdots \ar[r] & P_0 \ar[r] & P_{r-1} \ar[r] &  \cdots \ar[r] & Q_{b'} \ar[r] & 0,
}\]
for $m+r< n<n'$ with $r|(n-m-1)$,  $r|(n'-n)$  and $r\leq a, b, a', b'< N$ satisfying  $a'\leq a$,  where for the case $a=a'$ the unnamed arrow $Q_a\rightarrow Q_{a'}$ is the identity map; dually, we have
\[
\xymatrix @C=10pt{
B_{m, n, a, b} \ar[d]_{{\rm (mx.VII)}} \colon & 0 \ar[r] & Q_a \ar[r]  & \cdots \ar[r] & P_0\ar[d]\ar[r] & P_{r-1}\ar@{=}[d]\ar[r] & \cdots \ar[r] & P_0 \ar@{=}[d] \ar[r] &  Q_b \ar[d]\ar[r] & 0 \\
B_{m', n, a', b'} \colon  & & & 0 \ar[r] & Q_{a'} \ar[r] & P_{r-1} \ar[r] & \cdots \ar[r] & P_0\ar[r] & Q_{b'}\ar[r] & 0,
}\]
for $m<m'< n-r$ with $r|(m'-m)$, $r|(n-m-1)$ and $r\leq a, b, a', b'< N$ satisfying $b'\leq b$, where for the case $b=b'$ the unnamed arrow $Q_b\rightarrow Q_{b'}$ is the identity map;
\[
\xymatrix @C=10pt{
B_{m, n, a, b} \ar[d]_{{\rm (mx.VIII)}} \colon & 0 \ar[r] & Q_a\ar[d]  \ar[r] & P_{r-1}\ar@{=}[d]\ar[r] &\cdots \ar[r] & P_0\ar@{=}[d]\ar[r]  & Q_b \ar[d]\ar[r] & 0 \\
B_{m, n, a', b'} \colon  &  0 \ar[r] & Q_{a'} \ar[r] & P_{r-1} \ar[r] & \cdots \ar[r] & P_0\ar[r] &  Q_{b'}\ar[r] & 0,
}\]
for $m< n-r$ with $r|(n-m-1)$, $a'\leq a$ and $b'\leq b$;
\[
\xymatrix @C=10pt{
B_{m, n, a, b} \ar[d]_{{\rm (mx.IX)}} \colon & 0 \ar[r] & Q_a  \ar[r] & P_{r-1}\ar[r] &\cdots \ar[r] & P_0\ar[d]\ar[r]  & Q_b \ar[d]\ar[r] & 0 \\
Z_{n-1, a', b'} \colon  &   & & & 0 \ar[r] & Q_{a'} \ar[r] &   Q_{b'}\ar[r] & 0,
}\]
for $m<n-r$ with $r|(n-m-1)$ and $r\leq b'\leq b<a'< N$;
\[
\xymatrix @C=10pt{
Z_{m, a, b} \ar[d]_{{\rm (mx.X)}}\colon &  0 \ar[r] & Q_a \ar[d] \ar[r] & Q_b \ar[d] \ar[r] & 0\\
B_{m, n, a', b'}\colon & 0 \ar[r] & Q_{a'} \ar[r] & P_{r-1} \ar[r] & \cdots \ar[r] & P_0 \ar[r] & Q_{b'} \ar[r] & 0
}\]
for $m\leq n-r$ with $r|(n-m-1)$  and $r\leq b<a'\leq a<N$;
\[
\xymatrix @!=8pt{
Z_{m, a, b} \ar[d]_{{\rm (mx.XI)}}\colon &  0 \ar[r] & Q_a \ar[d] \ar[r] & Q_b \ar[d] \ar[r] & 0\\
Z_{m,  a', b'}\colon & 0 \ar[r] & Q_{a'} \ar[r]  & Q_{b'} \ar[r] & 0
}\]
for each integer $m$ and $r\leq b'\leq b<a'\leq a< N$.

The following result can be proved by the same argument as in Proposition \ref{prop:A}. The second statement is contained in \cite[Theorem 6.1]{BPP}.

\begin{prop}\label{prop:B}
Keep the notation as above. Then the following statements hold.
\begin{enumerate}
\item The above morphisms
\begin{align*}
&\{i_{s, m, n}, j_{m, n, a}, i'_{m, n, b}, \iota_{m, n, a, b}, \xi_{m, a, b}\}\cup \{\pi_{s, m, n},\pi'_{m, n, a}, p_{m, n, b}, q_{m, n, a,b}, \zeta_{m, a,b} \}\\
&\cup \{c_{l, m, n, a, b}\}\cup \{{\rm (mx.I)-(mx.XI)}\}
\end{align*}
form a spanning set of  $\mathcal{T}$.
\item For two objects $X, Y\in \Lambda$, we have ${\rm dim} \; {\rm Hom}_\mathcal{T}(X, Y)\leq 1$. \hfill $\square$
\end{enumerate}
\end{prop}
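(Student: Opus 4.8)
The plan is to follow the proof of Proposition \ref{prop:A} essentially verbatim, adapting the bookkeeping to the extra cyclic index $s\in\mathbb{Z}/r\mathbb{Z}$ of the terms $P_s$ and to the divisibility constraints $r\mid(n-l)$, $r\mid(n-m-1)$ that pervade the case $r>1$.

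First I would establish the two reduction lemmas that precede the proof of Proposition \ref{prop:A}: given $X,Y\in\Lambda$, after composing with a suitable projection ${\rm pr}\colon X\to X'$ and inclusion ${\rm inc}\colon Y'\to Y$ one may assume that the pair $({\rm supp}(X'),{\rm supp}(Y'))$ is crossing (this is Lemma \ref{lem:A}), and then any nonzero morphism between objects with crossing support factors, after further truncation, through a crossing morphism. Both are proved by the same truncation and induction-on-width arguments, which use only the shape of the supports and so transfer without change.

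For (1), the key claim is that, up to a nonzero scalar, every crossing morphism $f\colon X\to Y$ is a composition of the listed morphisms. A width-one crossing morphism is, up to scalar, a connection $c_{l,m,n,a,b}$; here one must check that its vertical component $P_{r-m+l}\to P_{n-m-1}$ is the nonzero unnamed arrow, which is exactly where the divisibility $r\mid(n-l)$ is used. For width at least two I would run through the $25$ pairs of classes from $\Lambda=\{X_{s,m,n},L_{m,n,a},R_{m,n,b},B_{m,n,a,b},Z_{m,a,b}\}$, peeling off an inclusion or a projection to lower the width and matching the remainder against a connection or one of the mixed classes ${\rm (mx.I)}$ through ${\rm (mx.XI)}$. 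Each case is a short direct computation.

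For (2), I would again reduce by Lemma \ref{lem:A} to crossing support and compute ${\rm Hom}_\mathcal{T}(X,Y)$ for the same $25$ cases, checking each is at most one-dimensional; this inequality is also exactly \cite[Theorem 6.1]{BPP}. The decisive difference from the case $r=1$ is that no object of $\Lambda$ carries an almost-vanishing endomorphism: the complex $X_{s,m,n}$ has pairwise distinct consecutive terms $P_s,P_{s-1},\dots$, so the periodic self-map producing $\Delta_{m,n}$ in Remark \ref{rem:alm} no longer exists and ${\rm End}_\mathcal{T}(Y)=k{\rm Id}_Y$ for every $Y$; this is precisely why the Hom-bound drops from $2$ to $1$. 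The main obstacle is not conceptual but combinatorial: keeping the divisibility relations and the cyclic residue $s$ consistent across all $25$ cases, so that each asserted factorization in (1) genuinely lands among the listed morphisms and each Hom-space in (2) is exhausted by a single one of them.
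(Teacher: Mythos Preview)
Your proposal is correct and matches the paper's own treatment exactly: the paper simply asserts that Proposition~\ref{prop:B} ``can be proved by the same argument as in Proposition~\ref{prop:A}'' and cites \cite[Theorem 6.1]{BPP} for part (2), with no further details. Your outline of the adaptations (the extra cyclic index $s$, the divisibility constraints, and the disappearance of the almost-vanishing endomorphism) is precisely the bookkeeping the paper leaves implicit.
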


We now sketch a proof of Theorem \ref{thm:A} for the case $r>1$, which  is almost identical to the one in Section 4. We only indicate  necessary changes.

\begin{proof}
Let $(F, \omega)$ be a pseudo-identity on $\mathcal{T}$. By Lemma \ref{lem:normal}, we assume that $(F, \omega)$ is normalized.

It suffices to adjust $(F, \omega)$ by nonzero scalars such that the adjusted pseudo-identity  $(\bar{F}, \bar{\omega})$ acts trivially on the spanning set in   Proposition \ref{prop:B}(1). We observe that Proposition \ref{prop:A2} applies to $\mathcal{T}$, where the subset $S$ of $\Lambda={\rm ind}\mathcal{T}$ is empty. Thanks to Proposition \ref{prop:A2}(1), the homomorphism (\ref{equ:res}) is injective. By Lemma \ref{lem:span} and Proposition \ref{prop:A2}(2), we infer that $ (\bar{F}, \bar{\omega})$ is isomorphic to the identity functor.  Therefore, the category $\mathcal{A}$ is strongly $\mathbf{K}$-standard.

For the required adjustment, we proceed as in Section 4. The first step is almost the same, just replacing $X_{m, n}$ by $X_{s, m, n}$. Consequently, the adjusted pseudo-identity  is still normalized and acts trivially on all the inclusions.

For the second step, (\ref{equ:1}) is replaced by $F(c_{s, m, n-1, n-1})=c_{s, m, n-1, n-1}$. Here,
$$c_{s, m, n-1, n-1}\colon X_{s, m, n-1}\longrightarrow X_{s-n+m, n-1, n-1}$$
is the obvious connection, whose $(n-1)$-th degree is given by the unnamed arrow $P_{s-n+m+1}\rightarrow P_{s-n+m}$.  Then the triangle (\ref{equ:2}) is  replaced by the following one
\begin{align*}
X_{s-n+m, n, n} \xrightarrow{\rm inc} X_{s, m, n}\xrightarrow{\pi_{s, m, n}}  X_{s, m, n-1} \xrightarrow{c_{s, m, n-1, n-1}} X_{s-n+m, n-1, n-1}.
\end{align*}
Applying $(F, \omega)$ to it and using Lemma \ref{lem:scalar}, we infer that $F(\pi_{s, m,n})=\pi_{s, m, n}$. Here, we use the fact that $\omega_{X_{s-n+m, n, n}}$ is the identity, since the given pseudo-identity is normalized. The remaining part of Step 2 carries over as in Section 4. The third step is the same as  Step 3 in Section 4.

For the last step, we just repeat Step 4 in Section 4. For example, the commutative diagram (\ref{equ:3}) is replaced by
\[
\xymatrix{
X_{m-n, n, n} \ar@{=}[d] \ar[r]^-{\rm inc} & L_{m, n, a} \ar[d]^-{\rm (mx.I)}\\
X_{m'-n, n, n} \ar[r]^-{\rm inc} & L_{m', n, b}.
}\]
Here,  we recall that $r|(m'-m)$ in defining ${\rm (mx.I)}$, and thus $X_{m-n, n, n}=X_{m'-n, n, n}$. We omit the details.
\end{proof}

\vskip 10pt

\noindent {\bf Acknowledgements}\quad  The authors are supported by National Natural Science Foundation of China (No.s 11522113,  11671245 and 11601098), and  Natural Science Foundation of Guizhou Province (QSF[2016]1038).

\bibliography{}

\vskip 10pt

 {\footnotesize \noindent Xiao-Wu Chen\\
 Key Laboratory of Wu Wen-Tsun Mathematics, Chinese Academy of Sciences\\
School of Mathematical Sciences, University of Science and Technology of China\\
No. 96 Jinzhai Road, Hefei, 230026, Anhui, P.R. China.\\
URL: http://home.ustc.edu.cn/$^\sim$xwchen}

\vskip 5pt

 {\footnotesize \noindent Chao Zhang\\
 Department of Mathematics, Guizhou University, Guiyang 550025, Guizhou,  P.R. China.}

\end{document}